\newcommand{\ds}                   {\displaystyle}
\newcommand{\ADM}                  {{\textrm{ADM}}}
\newcommand{\qurtains}             {\hfill{QED}}
\newtheorem{definition}{\vspace{1mm}Definition}[section]
\newtheorem{pth}[definition]{\vspace{1mm}Theorem}
\newtheorem{corollary}[definition]{\vspace{1mm}Corollary}
\newtheorem{prop}[definition]{\vspace{1mm}Proposition}
\newenvironment{proof}{\begin{trivlist}\item{\bf Proof:}}{\qurtains\end{trivlist}}
\newcommand{\nats}                  {{\mathbb N}}
\newcommand{\udl}[1]                {{\underline{#1}}}
\newcommand{\lverum}                {{\mathsf{t\!t}}}
\newcommand{\lfalsum}               {{\mathsf{f\!f}}}
\newcommand{\lnec}                  {{\mathop{\Box}}}
\newcommand{\lposs}                  {{\mathop{\lozenge}}}
\newcommand{\lneg}                  {{\mathop{\neg}}}
\newcommand{\limp}                  {\mathbin{\supset}}
\newcommand{\leqv}                  {\mathbin{\equiv}}
\newcommand{\lconj}                 {\mathbin{\wedge}}
\newcommand{\ldisj}                 {\mathbin{\vee}}
\newcommand{\sat}                    {\Vdash}
\newcommand{\ent}                    {\vDash}
\newcommand{\der}                    {\vdash}
\newcommand {\cL}                   {\mathcal{L}}
\newcommand {\cM}                   {\mathcal{M}}
\newcommand {\frA}                  {\mathfrak{A}}
\newcommand{\mverum}[1]             {\lverum^{(#1)}}
\newcommand {\Sigmamc}              {\Sigma_{\mc{12}}}
\newcommand {\Sigmamcm}[1]          {\Sigma_{{\mc{12}}{#1}}}
\newcommand {\Deltamc}              {\Delta_{\mc{12}}}
\newcommand {\cMmc}                 {\cM_{\mc{12}}}
\newcommand {\Lmc}                  {L_{\mc{12}}}
\newcommand {\satmc}                {\sat_{\mc{12}}}
\newcommand {\entmc}                {\ent_{\mc{12}}}
\newcommand {\dermc}                {\der_{\mc{12}}}
\newcommand{\mc}[1]                 {\mathop{\lceil#1\rceil}}
\newcommand{\bmc}[2]                {\left\lceil\arraycolsep=0.0pt \begin{array}{l}#1\\[-1mm]#2 \end{array}\right\rceil}
\newcommand{\bmcu}[2]                {\mathop{\bmc{#1}{#2}}}
\newcommand{\bmcb}[2]                {\mathbin{\bmc{#1}{#2}}}
\newcommand {\mysub}[3]		 		{#1|^{#2}_{#3}}
\newcommand {\suball}[2]		    {\mysub{#1}{}{#2}}
\newcommand {\HYP}					{\text{HYP}}
\newcommand {\MP}                   {\text{MP}}
\newcommand {\LFT}                  {\text{LFT}}
\newcommand {\cLFT}                 {\text{cLFT}}
\newcommand {\FX}                   {\text{FX}}
\newcommand {\GL}                   {\mathsf{GL}}
\newcommand {\SQT}                   {\mathsf{S4.3}}
\newcommand {\IPL}                  {\mathsf{IPL}}
\begin{document}

\title{Preservation of admissible rules\\
 when \\
combining logics}

\author{J.~Rasga\ \ C.~Sernadas\ \ A.~Sernadas\\[1mm]
{\scriptsize Dep.~Matemática, Instituto Superior Técnico, Universidade de Lisboa, Portugal}\\[-1mm]
{\scriptsize and}\\[-1mm]
{\scriptsize SQIG, Instituto de Telecomunicações, Lisboa, Portugal}\\
{\tiny \{joao.rasga,cristina.sernadas,amilcar.sernadas\}@tecnico.ulisboa.pt}}

\date{July 13, 2015}

\maketitle

\begin{abstract}
Admissible rules are shown to be conservatively preserved by the meet-combination of a wide class of logics.
A basis is obtained for the resulting logic from bases given for the component logics. 
Structural completeness and decidability of the set of admissible rules are also shown to be preserved, the latter with no penalty on the time complexity.
Examples are provided for the meet-combination of intermediate and modal logics.
\\[2mm]
{\bf Keywords}: admissibility of rules, structural completeness, combination of logics.\\[2mm]
{\bf AMS MSC2010}: 03B62, 03F03, 03B22.
\end{abstract}

%%%%%%%%%%%%%%%%%%%%%%%%%%%%%%%%%%%%%%%%%%%%%%%%%%%%%
%%%%%%%%%%%%%%%%%%%%%%%%%%%%%%%%%%%%%%%%%%%%%%%%%%%%%
\section{Introduction}\label{sec:intro}

The notion of admissible rule was proposed by Lorenzen~\cite{lor:55} when analysing definitional reflection and the inversion principle.\footnote{Recall that a rule is said to be {\it admissible} if every instantiation that makes the premises theorems also makes the conclusion a theorem.}
 Although every derivable rule is admissible, the converse does not always hold. 
 For instance, intermediate logics such as intuitionistic logic and the Gabbay-de Jongh logics \cite{gab:74} have admissible rules that are not derivable. 
One of the most well known examples is the Harrop rule~\cite{har:60} which is admissible but not derivable in intuitionistic logic.
This rule was  later shown in~\cite{pru:73} to be also admissible for all intermediate logics.
Many examples of non-derivable admissible rules appear in the context of modal logics~\cite{ryb:97}.
On the other hand, there are logics, like for example propositional logic  and the G\"odel-Dummett logic  that are structurally complete, meaning that every admissible rule is derivable~\cite{pru:73,ols:08}. 

An important research line on admissibility is concerned with finding a finite (or at least a recursive) basis for admissible rules. 
This problem was addressed for instance in~\cite{ryb:97}, using algebraic techniques based on quasi-identities in order to establish the non existence of a finite basis for the admissible rules of intuitionistic logic. 
It was further investigated for many logics including intermediate and intuitionistic logics~\cite{iem:01,iem:05}, Gabbay-de Jongh logics~\cite{iem:14}, modal logics~\cite{ryb:97}, many-valued logics~\cite{jer:10} and a paraconsistent logic~\cite{odi:13}. 
The work on unification~\cite{ghi:99,ghi:00} was central to some of the main results on this front, namely for showing in~\cite{iem:01,iem:05}  that  the Visser's rules constitute a non-finite recursive basis for intuitionist logic, for constructing in~\cite{jer:05} explicit bases for several normal modal logics, and more recently in~\cite{iem:14} for providing a basis for the admissible rules of the Gabbay-de Jongh logics.  
A systematic presentation of analytic proof systems for deriving admissible rules for intuitionistic logic and for a wide class of modal logics extending $\mathsf{K4}$ was presented in~\cite{iem:09}. 

The question of whether the set of admissible rules for a logic is decidable is another important research line. 
This question was first raised for intuitionistic logic by Friedman in~\cite{fri:75} (see problem 40) and given a positive answer by Rybakov in~\cite{ryb:84}. 
This positive result was extended to superintuitionistic logic and some modal logics in~\cite{bab:92,ryb:97,ghi:00,jer:05}). 
On the negative side, in~\cite{wol:08} it was shown that admissibility is undecidable for the basic modal logics 
$\mathsf{K}$ and $\mathsf{K4}$ extended with the universal modality.  
Complexity of the admissibility decision problem was investigated in~\cite{jer:07,jer:13,met:10}.

The significance of the problem of combining logics has meanwhile been recognised in many application domains of logic, namely in knowledge representation and in formal specification and verification of algorithms and protocols, since, in general, the need for working with several calculi at the same time is the rule rather than the exception. 
For instance, assuming that we have a logic for reasoning about time and a logic for reasoning about space we may want to express properties involving time and space. 
That is, one frequently needs to set-up theories with components in different logic systems or even better to work with theories in the combination of those logic systems. 
Several forms of combination have been proposed like, for example, fusion of modal logics (proposed in~\cite{tho:84}, see also~\cite{kra:wol:91,gab:03}), fibring (proposed in~\cite{gab:96}, see also~\cite{gab:99,acs:css:ccal:98a,zan:acs:css:99,css:jfr:wcarnielli:01}) and meet-combination (proposed in~\cite{acs:css:jfr:11b}). 
The topic of combination of logics raised some significant theoretical problems, such as the preservation of meta-properties like completeness~\cite{kra:wol:91,zan:acs:css:99,css:jfr:wcarnielli:01,acs:css:jfr:11b}, interpolation~\cite{kra:wol:91,wcarnielli:jfr:css:04,css:jfr:acs:12} and decidability~\cite{kra:wol:91}. 

Herein, we investigate admissibility preservation and related issues, like structural completeness, existence of bases, decidability and complexity of the set of admissible rules, in the context of combination of logics. 
We focus on meet-combination of logics because it is the weakest mechanism for combining logics in the sense that it minimizes the interaction between the components in the resulting logic. 
Section~\ref{sec:prems} includes a brief review of the relevant basic notions and results concerning meet-combination and admissibility, as well as examples concerning logics that are used throughout the paper.
We investigate the preservation of admissible rules by the meet-combination in Section~\ref{sec:admimeet} where we establish that a rule is admissible in the resulting logic whenever its projection to each component is admissible.
Moreover, we also show that this preservation is conservative under some mild assumptions.
In Section~\ref{sec:basespres} we address the problem of building a basis for admissible rules in the resulting logic from bases given for the component logics. 
In Section~\ref{sec:structcomp} we show that if the given logics are structurally complete then so is the logic resulting from their meet-combination, under some mild requirements. Moreover, we present a decision algorithm for admissibility in the resulting logic, using given decision algorithms for admissibility in the components. Finally, we analyse the time complexity of the algorithm.
In Section~\ref{sec:outlook}, we assess what was achieved and point out possible future developments.

%%%%%%%%%%%%%%%%%%%%%%%%%%%%%%%%%%%%%%%%%%%%%%%%%%%%%
%%%%%%%%%%%%%%%%%%%%%%%%%%%%%%%%%%%%%%%%%%%%%%%%%%%%%
\section{Preliminaries} \label{sec:prems}

By a {\it matrix logic} we mean a triple  $\cL=(\Sigma,\Delta,\cM)$ where:

\begin{itemize}

\item The {\it signature} $\Sigma$ is a family $\{\Sigma_n\}_{n\in\nats}$ with each 
$\Sigma_n$ being a set of $n$-ary language {\it constructors}.
Formulas are built as usual with the constructors and the {\it propositional or schema variables} in $\Xi=\{\xi_k \mid k\in\nats\}$. We use  $L(\Xi)$ for the set of all formulas.

\item The Hilbert {\it calculus} $\Delta$ is a set of rules.\footnote{By a (finitary) {\it rule} we mean a pair $(\{\alpha_1,\dots,\alpha_m\},\beta)$, denoted by $$\alpha_1\dots\alpha_m \; / \; \beta$$ 
where $\alpha_1,\dots,\alpha_m,\beta$ are formulas. Formulas $\alpha_1,\dots,\alpha_m$ are said to be the {\it premises} of the rule
and formula $\beta$ is said to be its {\it conclusion}.}
We write
$\Gamma \der \varphi$
for stating that $\varphi$ is derivable from set $\Gamma$ of formulas, that is, 
when there is a derivation of $\varphi$ from $\Gamma$. We write $\der \varphi$ whenever $\emptyset \der \varphi$, and say that $\varphi$ is a theorem.

\item The matrix {\it semantics} $\cM$ is a non empty class of matrices over $\Sigma$. Recall that a matrix $M$ is a pair $(\frA,D)$ where $\frA$ is an algebra over $\Sigma$ and $D$ is a non-empty subset of its carrier set $A$. The elements of $D$ are called distinguished values. 
We write
$\Gamma \ent \varphi$
for stating that, for each matrix $M=(\frA,D)$ and assignment $\rho:\Xi\to A$, 
if the denotation in $M$ and $\rho$ of each $\gamma\in \Gamma$ is a distinguished value, then so is the denotation of 
$\varphi$. Moreover, we write $\vDash\varphi$ whenever $\emptyset\vDash\varphi$.

\end{itemize}
Observe that $\cM$ is not necessarily the class of matrices canonically induced by $\Delta$,
as illustrated in due course.
Soundness and completeness are defined as expected.

A rule $\alpha_1\dots\alpha_m \; / \; \beta$  is said to be an  {\it admissible rule} of logic $\cL$ whenever 
$$\text{if $\der \sigma(\alpha_i)$ for each $i=1,\dots,m$ then $\der \sigma(\beta)$}$$
for every substitution $\sigma: \Xi \to L(\Xi)$.\footnote{There is an alternative definition of admissible rule but (see \cite{met:12}) it coincides with the one above when considering uni-conclusion rules.}
Given  a set of rules $\cal R$ and $\Gamma\cup\{\varphi\}\subseteq L(\Xi)$, 
$\varphi$ is said to be derivable by $\cal R$ from $\Gamma$, written
$$\Gamma \der^{\cal R} \varphi$$
whenever there is a sequence $\varphi_1\dots \varphi_m$ where $\varphi_m$ is $\varphi$ and for each $i=1,\dots,m$ either
$\varphi_i \in \Gamma$ or $(\{\varphi_{i_1},\dots, \varphi_{i_k}\}, \varphi_i)$ is a substitution instance of some rule in ${\cal R} \cup \Delta$, for some $i_1,\ldots,i_k$ less than $i$. A set of rules $\cal R$ constitutes a {\it basis} for some other set of rules $\cal R'$ if for every rule
$\alpha_1\dots\alpha_m \; / \; \beta$ in $\cal R'$ we have $\alpha_1\dots\alpha_m \der^{\cal R} \beta$. 
Logic $\cL$ is said to be \emph{structurally complete} whenever each admissible rule  is derivable. 

%%%%%%%%%%%%%%%%%%%%%%%%%%%%%%%%%%%%%%%%%%%%%%%%%%%%%

For defining meet-combination we need some assumptions on the logics at hand.  
We assume that $\lverum$ and $\lfalsum$ are in $\Sigma_0$ and are such that
$\der \lverum$ and $\lfalsum \der \varphi$ for every formula $\varphi$. 
Moreover, we assume that no matrix satisfies $\lfalsum$ and all the matrices satisfy $\lverum$.
Finally, for each $n \geq 1$, we assume that $\mverum{n}$ is in $\Sigma_n$ and is such that 
$\mverum{n}(\varphi_1,\dots,\varphi_n)$ is equivalent to $\lverum$.\footnote{In most logics such constructors could be introduced as abbreviations, as it is the case of all logics used in this paper.}

Given matrix logics
$\cL_1=(\Sigma_1,\Delta_1,\cM_1)$ and $\cL_2=(\Sigma_2,\Delta_2,\cM_2)$, 
their  {\it meet-combination} is the logic 
$$\mc{\cL_1\cL_2}=(\Sigmamc,\Deltamc,\cMmc)$$ 
where $\Sigmamc$, $\Deltamc$ and $\cMmc$ are as follows.

The signature $\Sigmamc$ is
such that, for each $n \in \nats$,
$$\Sigmamcm{n} = \{\mc{c_1c_2} \mid c_1 \in \Sigma_{1n},c_2\in\Sigma_{2n}\}.$$
The constructor $\mc{c_1c_2}$ is said to be the {\it meet-combination} of $c_1$ and $c_2$. In the sequel we may use
$$\bmc{c_1}{c_2}$$
instead of $\mc{c_1c_2}$ for the sake of readability. 
As expected, we use $\Lmc(\Xi)$ for denoting the set of  all formulas over $\Sigmamc$. Observe that we look at signature $\Sigmamc$ as an enrichment of $\Sigma_1$ via the embedding 
$\eta_1:c_1 \mapsto \mc{c_1\mverum{n}_2}$ for each $c_1 \in \Sigma_{1n}$
and similarly for $\Sigma_2$.
For the sake of lightness of notation, in the context of $\Sigmamc$, from now on, we may write 
$c_1$ for $\mc{c_1\mverum{n}_2}$ when $c_1\in\Sigma_{1n}$
and
$c_2$ for $\mc{\mverum{n}_1c_2}$ when $c_2\in\Sigma_{2n}$.
In this vein, for $k=1,2$, we look at $L_k(\Xi)$ as a subset of $\Lmc(\Xi)$.
Given a formula $\varphi$ over $\Sigmamc$ and $k\in\{1,2\}$, we denote by 
$$\suball{\varphi}{k}$$
the formula obtained from $\varphi$ by replacing every occurrence of each combined constructor by its $k$-th component. Such a formula is called the {\it projection} of $\varphi$ to $k$.

We need some notation before introducing the Hilbert calculus for the meet-combination.
Given a logic $\cL$ and a rule $r=\alpha_1\ldots\alpha_m\;/\;\beta$, the \emph{tagging of $r$ over} $\cL$, denoted by
$$\overline{r},$$
is the set of rules consisting of:
\begin{itemize}
	\item $r$ if $r$ is a {\it non-liberal rule} (that is, a rule where the conclusion is not a schema variable);
	\item for each $c \in \Sigma_{n}$ and $n\in\nats$, the rule
	$$\rho_{r,c}(\alpha_1)\quad\dots\quad\rho_{r,c}(\alpha_m) \; / \;
	    \rho_{r,c}(\beta)$$
where the substitution $\rho_{r,c}$ is such that $\rho_{r,c}(\xi)=\xi$ if $\xi$ is not $\beta$, and $\rho_{r,c}(\beta)=c(\xi_{j+1},\dots,\xi_{j+n})$ with $j$ being the maximum of the indexes of the schema variables occurring in $r$, if $r$ is a {\it liberal rule} (that is, a rule where the conclusion is a schema variable).
\end{itemize}
Moreover, given a set of rules $\mathcal{R}$ of $\cL$, we denote by
$\overline{\mathcal{R}}$
the set $$\ds\bigcup_{r\in\mathcal{R}}\overline{r}$$
of all tagged rules of $\mathcal{R}$ over $\cL$.

The calculus $\Deltamc$ is composed of the tagged version over $\cL_1$ of the rules inherited from $\Delta_1$ (via the implicit embedding $\eta_1$) and the tagged version over $\cL_2$ of the rules inherited from $\Delta_2$ (via the implicit embedding $\eta_2$), plus the rules imposing that each combined connective enjoys the common properties of its components and the rules for propagating falsum. More precisely, $\Deltamc$ contains the following rules:
\begin{itemize}
	
\item the {\it inherited rules} in $\overline{\Delta}_k$, for $k=1,2$;
	
\item the {\it lifting rule} (in short $\LFT$)
	$$\suball{\varphi}{1} \  \suball{\varphi}{2} \; / \; \varphi,$$
	for each formula $\varphi\in\Lmc(\Xi)$;
	
\item the {\it co-lifting rule} (in short $\cLFT$)
	$$\varphi \; / \; \suball{\varphi}{k},$$
	for each formula $\varphi\in\Lmc(\Xi)$ and $k=1,2$;

\item the {\it falsum propagation rules} (in short $\FX$) of the form 
		$$\displaystyle \lfalsum_1\; / \; \lfalsum_2 \quad \text{and} \quad \lfalsum_2\; / \;\lfalsum_1.$$
	
\end{itemize}

At first sight one might be tempted to include in $\Deltamc$ every rule in $\Delta_1\cup\Delta_2$.
For instance, if modus ponens ($\MP$) is a rule in $\Delta_1$ one would expect to find in $\Deltamc$ the rule
$\xi_1 \; \; (\xi_1 \limp_1 \xi_2)\; / \;\xi_2.$
However, as discussed in~\cite{acs:css:jfr:11b}, this rule is not sound. Instead, we tag such a liberal rule, including in $\Deltamc$,  the $c$-tagged modus ponens rule 
$\xi_1\; \; (\xi_1 \limp_1 c(\xi_3,\dots,\xi_{2+n})) \; / \;
       c(\xi_3,\dots,\xi_{2+n})$
       for each $c\in\Sigma_{1n}$ and $n\in\nats$.

The lifting rule is motivated by the idea that $\mc{c_1c_2}$ inherits the common properties of $c_1$ and $c_2$.
The co-lifting rule is motivated by the idea that $\mc{c_1c_2}$ should enjoy only the common properties of $c_1$ and $c_2$. 

The semantics $\cMmc$ is the class of product matrices
$$\{M_1 \times M_2 \mid M_1\in\cM_1 \; \text{and} \; M_2\in\cM_2\}$$
over $\Sigmamc$ such that each
$M_1 \times M_2 = (\frA_1 \times \frA_2,D_1 \times D_2)$
where
$$\frA_1 \times \frA_2 = (A_1 \times A_2,\{\udl{\mc{c_1c_2}}:(A_1 \times A_2)^n\to A_1 \times A_2 \mid 
                              \mc{c_1c_2}\in\Sigmamcm{n}\}_{n\in\nats})$$
with
$\udl{\mc{c_1c_2}}((a_1,b_1),\dots,(a_n,b_n)) =
	           (\udl{c_1}(a_1,\dots,a_n),\udl{c_2}(b_1,\dots,b_n)).$

Observe that, as shown in~\cite{acs:css:jfr:11b}, the meet-combination $\mc{\cL_1\cL_2}$ of two sound and concretely complete (that is, with respect to formulas without schema variables) matrix logics $\cL_1$ and $\cL_2$ provides an axiomatisation of the product of their matrix semantics since it preserves soundness and concretely completeness. Observe also that the embeddings $\eta_1$ and $\eta_2$ are conservative, as established in~\cite{acs:css:jfr:11b}.

%%%%%%%%%%%%%%%%%%%%%%%%%%%%%%%%%%%%%%%%%%%%%%%%%%%%%
\subsection*{Examples}

A matrix $M$  for intuitionist logic, referred to as $\IPL$ in the sequel, is   an Heyting algebra $\frA$ and the set of distinguished values is $D=\{\top\}$. Observe that $\IPL$ is not structurally complete since, for instance, the Harrop rule, 
$$ (\lneg \xi_1) \limp (\xi_2 \ldisj \xi_3)\, /\, ((\lneg \xi_1)\limp \xi_2) \ldisj ((\lneg \xi_1)\limp \xi_3)$$
is admissible but not derivable. A  basis for $\IPL$ is composed by the Visser's rules~\cite{iem:05}: 
$$\bigg\{\left(\bigwedge_{i=1} ^n(\xi_i \limp \xi'_i) \limp \xi_{n+1} \ldisj \xi_{n+2}\right) \ldisj \xi''\; {\bigg /} \; 
\bigvee_{j=1}^{n+2} \left(\bigwedge_{i=1} ^n(\xi_i \limp \xi'_i) \limp \xi_{j}\right) \ldisj \xi'': n \geq 1\bigg\}.$$
A matrix $M$ for modal logic $\SQT$ is an algebra $\frA_{(W,R)}$ induced by a Kripke frame $(W,R)$ where  
$R$ is reflexive, transitive and connected, i.e.~the carrier set $A$ is $\wp W$, $\bar \lneg$ is such that $\bar \lneg(U)= W \setminus U$, $\limp$ is such that 
$\bar \limp(U_1,U_2)= (W \setminus U_1) \cup U_2$, $\bar \lnec$ is such that
$\bar \lnec U=\{w \in W: \text{ if }  w R u \text{ then } u \in U\}$ and $D=\{W\}$. Again $\SQT$ is not structurally complete.
Observe that  the rule 
$$(\lnec \xi) \limp \xi \,/\, \xi$$ is non-admissible rule in $\SQT$.
A basis for $\SQT$ (see~\cite{ryb:97}) is a singleton composed by the admissible rule
$$(\lposs \xi)\lconj (\lposs \lneg \xi) \; / \; \lfalsum.$$

A matrix $M$ for modal logic $\GL$, after  G\"odel and L\"ob,  is an algebra $\frA_{(W,R)}$ induced by a Kripke frame $(W,R)$ where $R$ is transitive, finite and irreflexive, i.e.~the carrier set 
$A=\wp W$, $\bar \lneg$ is such that $\bar \lneg(U)= W \setminus U$, $\limp$ is such that 
$\bar \limp(U_1,U_2)= (W \setminus U_1) \cup U_2$, $\bar \lnec$ is such that
$\bar \lnec U=\{w \in W: \text{ if }  w R u \text{ then } u \in U\}$ and $D=\{W\}$. Again $\GL$ is not structurally complete.
A basis for $\GL$ (see~\cite{jer:05}) is the following set:
$$\bigg\{\lnec \left(\lnec \xi' \limp \bigvee_{i=1}^n \lnec \xi_i\right) \ldisj \lnec \xi'' \; / \; \bigvee_{i=1}^n \lnec (\xi' \lconj \lnec \xi' \limp \xi_i)\ldisj \xi'': n \geq 1\bigg\}.$$

Observe that the admissibility problem is decidable for all these logics. Moreover, this problem is co-NEXP-complete for $\IPL$ and $\GL$~\cite{jer:13}. The complexity of the problem for $\SQT$ is known to be co-NP-complete~\cite{jer:05}. 

As a first example of meet-combination, consider $\mc{\IPL\,\GL}$ where, concerning for instance negation, we find three variants: $\lneg_\IPL$, $\lneg_\GL$ and $\mc{\lneg_\IPL \lneg_\GL}$. Given the conservative nature of the embeddings of $\IPL$ and $\GL$ in $\mc{\IPL\,\GL}$, the negation $\lneg_\GL$ behaves as in $\GL$ in the image of $\GL$ in $\mc{\IPL\,\GL}$ and the negation $\lneg_\IPL$ behaves as in $\IPL$ in the image of $\IPL$ in $\mc{\IPL\,\GL}$. For instance, 
$$\der_{\mc{\IPL\,\GL}} \xi \ldisj_\GL ({\lneg}_{\GL} \xi) \; \text{ and  } \; \not\der_{\mc{\IPL\,\GL}} \xi \ldisj_\IPL ({\lneg}_{\IPL} \xi).$$
Concerning $\mc{\lneg_\IPL \lneg_\GL}$, as expected, it behaves intuitionistically since it inherits only the properties common to both negations.
So, for example
$$
\not \der_{\mc{\IPL\,\GL}}
\left(
\bmcu{\lneg_\IPL}{\lneg_\GL}
\bmcu{\lneg_\IPL}{\lneg_\GL}
\xi\right) 
\bmcb{\limp_\IPL}{\limp_\GL}
 \xi.
$$

%%%%%%%%%%%%%%%%%%%%%%%%%%%%%%%%%%%%%%%%%%%%%%%%%%%%%
%%%%%%%%%%%%%%%%%%%%%%%%%%%%%%%%%%%%%%%%%%%%%%%%%%%%%
\section{Admissible rules}\label{sec:admimeet}

In this section, we concentrate on the conservative preservation of admissible rules by the meet-combination. The preservation results assume that the component logics are sound. The conservativeness requires additional properties on the original logics.
Anyway the additional requirements are fulfilled by a large class of logics as we shall see below. 

\subsection{Preservation}
Before showing that admissible rules are preserved by meet-combination, we prove some relevant lemmas.~The following result establishes a relationship between substitution in the logic resulting from the meet-combination and substitution in each of the component logics. In the sequel,
given a substitution $\sigma$, we denote by $\sigma(\varphi)$ the formula that results from $\varphi$ by replacing each schema variable $\xi$ by $\sigma(\xi)$.

\begin{prop}\em\label{prop:subsprojs}
Let $\cL_1$ and $\cL_2$ be logics, $k$ in $\{1,2\}$, and $\rho: \Xi \to \Lmc(\Xi)$ and $\rho_k:\Xi \to L_k(\Xi)$ substitutions such that $\rho_k(\xi)=\suball{\rho(\xi)}{k}$ for each $\xi$ in $\Xi$.  Then
$$\rho_k(\suball{\psi}{k})=\suball{\rho(\psi)}{k} \quad \text{ for every }\psi \in\Lmc(\Xi).$$
\end{prop}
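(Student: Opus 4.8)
The natural route is a straightforward structural induction on $\psi \in \Lmc(\Xi)$, exploiting the fact that the projection map $\suball{\cdot}{k}\colon \Lmc(\Xi)\to L_k(\Xi)$ is, by its very definition, the unique homomorphism that fixes each schema variable and sends a combined constructor $\mc{c_1c_2}\in\Sigmamcm{n}$ to its $k$-th component $c_k\in\Sigma_{kn}$; and that substitutions are by definition homomorphisms on the respective formula algebras. The hypothesis $\rho_k(\xi)=\suball{\rho(\xi)}{k}$ is exactly the assertion that $\rho_k$ and $\suball{(\rho(\cdot))}{k}$ agree on variables, so the content of the proposition is that two homomorphisms agreeing on generators agree everywhere; the induction just makes this precise.

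\textbf{Base case.} For $\psi=\xi$ a schema variable, the projection does nothing, so $\suball{\psi}{k}=\xi$, and hence $\rho_k(\suball{\psi}{k})=\rho_k(\xi)$, which equals $\suball{\rho(\xi)}{k}=\suball{\rho(\psi)}{k}$ by the hypothesis on $\rho_k$.

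\textbf{Inductive step.} Suppose $\psi=\mc{c_1c_2}(\psi_1,\dots,\psi_n)$ with $\mc{c_1c_2}\in\Sigmamcm{n}$ and $\psi_1,\dots,\psi_n\in\Lmc(\Xi)$ (the case $n=0$ being the same argument with no subformulas, so the induction hypothesis is vacuously available). By the definition of projection, $\suball{\psi}{k}=c_k(\suball{\psi_1}{k},\dots,\suball{\psi_n}{k})$; applying the homomorphism $\rho_k$ and then the induction hypothesis to each argument gives
$$\rho_k(\suball{\psi}{k})=c_k\bigl(\rho_k(\suball{\psi_1}{k}),\dots,\rho_k(\suball{\psi_n}{k})\bigr)=c_k\bigl(\suball{\rho(\psi_1)}{k},\dots,\suball{\rho(\psi_n)}{k}\bigr).$$
On the other side, since $\rho$ is a homomorphism on $\Lmc(\Xi)$ we have $\rho(\psi)=\mc{c_1c_2}(\rho(\psi_1),\dots,\rho(\psi_n))$, and projecting yields $\suball{\rho(\psi)}{k}=c_k(\suball{\rho(\psi_1)}{k},\dots,\suball{\rho(\psi_n)}{k})$. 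The two right-hand sides coincide, completing the induction.

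\textbf{Main obstacle.} There is no real obstacle; the only point requiring care is bookkeeping with the definitions—recognising that both $\suball{\cdot}{k}$ and the substitutions are algebra homomorphisms, and that under the identification of $L_k(\Xi)$ with a subalgebra of $\Lmc(\Xi)$ a $k$-ary constructor $c_k$ is being read as $\mc{c_k\mverum{n}_2}$ (resp.\ $\mc{\mverum{n}_1 c_k}$), so that the notation $c_k(\cdots)$ appearing on both sides denotes the same thing. Once that is spelled out, the identity is immediate.
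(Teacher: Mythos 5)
Your proof is correct and matches the paper's intended argument exactly: the paper omits the proof, stating only that it ``follows by a straightforward induction on $\psi$,'' which is precisely the structural induction you carry out. The base case and inductive step are both handled correctly, and your closing remark about the identification of $c_k$ with $\mc{c_k\mverum{n}_2}$ is the right bookkeeping point to flag.
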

We omit the proof of Proposition~\ref{prop:subsprojs} since it follows by a straightforward induction on $\psi$. The following result establishes 
that the logic resulting from the meet-combination of sound logic is consistent.

\begin{prop}\em\label{prop:thmfalsum}
Let $\cL_1$ and $\cL_2$ be sound logics. 
Then, $\not \dermc \lfalsum_k$ for each $k=1,2$ and $\not\dermc\xi$ for each $\xi$ in $\Xi$. 
\end{prop}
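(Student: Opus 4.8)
The plan is to establish consistency semantically, exploiting the fact that $\cMmc$ consists of product matrices and that a product matrix cannot satisfy $\lfalsum_k$. First I would argue that the combined calculus $\Deltamc$ is sound with respect to $\cMmc$. This amounts to checking that each of the four families of rules in $\Deltamc$ is validity-preserving over product matrices: the inherited tagged rules in $\overline{\Delta}_k$ are sound because $\Delta_k$ is sound for $\cM_k$ and the projection $\suball{\cdot}{k}$ together with the product structure reduces a product matrix and assignment to the $k$-th component matrix and an assignment there (so tagging does no harm, since a tagged instance is still a valid-preserving step under the $k$-th projection); the lifting rule $\suball{\varphi}{1}\ \suball{\varphi}{2}/\varphi$ is sound because the denotation of $\varphi$ in $M_1\times M_2$ is the pair of denotations of $\suball{\varphi}{1}$ in $M_1$ and $\suball{\varphi}{2}$ in $M_2$, so it lies in $D_1\times D_2$ exactly when both components are distinguished; the co-lifting rule $\varphi/\suball{\varphi}{k}$ is sound for the same reason (if the pair is in $D_1\times D_2$ then each coordinate is distinguished); and the falsum-propagation rules $\lfalsum_1/\lfalsum_2$, $\lfalsum_2/\lfalsum_1$ are sound vacuously, since by our standing assumptions no matrix satisfies $\lfalsum$, hence (in the product) no product matrix and assignment makes $\lfalsum_k$ distinguished, so the premise is never satisfied.

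Next I would observe that $\cMmc$ is non-empty: $\cM_1$ and $\cM_2$ are non-empty since $\cL_1,\cL_2$ are (sound) logics, so picking $M_1\in\cM_1$ and $M_2\in\cM_2$ yields $M_1\times M_2\in\cMmc$. Then I would show $\not\entmc\lfalsum_k$. Indeed, $\lfalsum_k$ is (the image under $\eta_k$ of) the constant $\lfalsum$ of $\cL_k$; in the product matrix $M_1\times M_2$ and any assignment, its denotation's $k$-th coordinate is the denotation of $\lfalsum$ in $M_k$, which by assumption is never a distinguished value of $M_k$, hence the pair is not in $D_1\times D_2$. So $\lfalsum_k$ is not valid in $\cMmc$. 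By soundness of $\Deltamc$, $\not\dermc\lfalsum_k$.

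Finally, for $\not\dermc\xi$ with $\xi\in\Xi$: again by soundness it suffices to find a product matrix, assignment, and to see that $\xi$ is not valid. Take any $M_1\times M_2\in\cMmc$; since $D_1$ is a non-empty proper subset of $A_1$ would be the easy case, but in general $D_1$ could be all of $A_1$, so instead I argue as follows. If $\xi$ were valid in $\cMmc$, then in particular $\entmc\xi$, whence $\entmc\suball{\xi}{k}$, i.e.\ $\entmc\xi$ again (a variable projects to itself) — this is circular, so instead I use the substitution $\sigma$ sending $\xi\mapsto\lfalsum_k$: if $\dermc\xi$ then, since admissibility/derivability is closed under substitution, $\dermc\sigma(\xi)=\dermc\lfalsum_k$, contradicting the previous paragraph. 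Hence $\not\dermc\xi$.

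The main obstacle is the soundness verification for the tagged inherited rules: one must check carefully that replacing a liberal rule $r$ by all its $c$-tagged versions $\rho_{r,c}(\alpha_1)\ \dots\ \rho_{r,c}(\alpha_m)/\rho_{r,c}(\beta)$ preserves soundness over the product semantics. This is where the interaction between the projection operator $\suball{\cdot}{k}$, the product algebra operations $\udl{\mc{c_1c_2}}$, and the embeddings $\eta_k$ has to be handled — using Proposition~\ref{prop:subsprojs} to commute substitution with projection, plus the fact that $\mverum{n}(\varphi_1,\dots,\varphi_n)$ is equivalent to $\lverum$ so that the $\eta_k$-images of $\cL_k$-rules act, under the $k$-th projection, exactly as the original $\cL_k$-rules. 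Everything else is a direct unwinding of the product-matrix definition.
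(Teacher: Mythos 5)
Your proposal is correct, and for the main claim ($\not\dermc\lfalsum_k$) it follows the same route as the paper: both arguments are semantic, passing from $\dermc\lfalsum_k$ to $\entmc\lfalsum_k$ by soundness and then contradicting the non-emptiness of $\cMmc$ (which follows from $\cM_1,\cM_2\neq\emptyset$) together with the standing assumption that no matrix satisfies $\lfalsum$. The difference is that the paper simply invokes soundness of $\mc{\cL_1\cL_2}$ as a preservation result imported from the earlier meet-combination paper, whereas you re-derive it by checking each rule family over product matrices; your version is self-contained at the cost of carrying out the tagged-liberal-rule verification, which you correctly single out as the only delicate point. For the second claim ($\not\dermc\xi$) your route is genuinely different: the paper argues syntactically that no derivation can end in a bare schema variable because all axioms (and, after tagging, all rule conclusions that matter) have a head constructor, while you reduce the claim to the first one via closure of theoremhood under the substitution $\xi\mapsto\lfalsum_k$. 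Both work; yours relies on structurality of $\dermc$, which is immediate from the definition of derivation as a sequence of substitution instances of rules (compose the substitutions), and has the advantage of not requiring any inspection of the shape of the rules, whereas the paper's observation is the one that would generalise to excluding theoremhood of other syntactic forms.
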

\begin{proof}
We start by proving that $\not \dermc \lfalsum_k$ for each $k=1,2$. Assume by contradiction that $\dermc \lfalsum_1$. By soundness of the meet-combination $\entmc \lfalsum_1$.  Then, $\cMmc$ is empty.  Indeed, suppose by contradiction,  that $M_1 \times M_2 \in \cMmc$. Then $M_1 \times M_2 \satmc \lfalsum_1$ and so $M_1 \sat_1 \lfalsum_1$ contradicting the definition of $\lfalsum_1$. 
Hence either $\cM_1$ is empty or $\cM_2$ is empty. But, this contradicts the definition of matrix that assumes that 
$\cM_1 \neq \emptyset$ and $\cM_2 \neq \emptyset$. The proof that $\not \dermc \lfalsum_2$ is similar so we omit it.\\[2mm]
 The other assertion follows immediately taking into account that all the axioms in $\Deltamc$ are tagged.
\end{proof}

The following result relates derivations of theorems in the combined logic with derivations in each component logic.

\begin{prop}\em\label{prop:reflectdersub}
Let $\cL_1$ and $\cL_2$ be sound  logics and $\psi$ a formula in $\Lmc(\Xi)$. Assume that $\dermc \psi$. Then 
$\der_1\suball{\psi}{1}$ and $\der_2\suball{\psi}{2}$.
\end{prop}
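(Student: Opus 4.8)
The plan is to prove this by induction on the structure of a derivation of $\psi$ in the combined calculus $\Deltamc$, showing simultaneously for $k=1,2$ that $\der_k \suball{\chi}{k}$ holds for every formula $\chi$ appearing in the derivation. Fix $k$; by the symmetry of the construction the argument for the other component is identical. So suppose $\chi_1,\dots,\chi_n=\psi$ is a derivation of $\psi$ in $\Deltamc$ (with no hypotheses, since $\dermc\psi$ means $\emptyset\dermc\psi$), and assume inductively that $\der_k\suball{\chi_j}{k}$ for all $j<i$; I must show $\der_k\suball{\chi_i}{k}$. The formula $\chi_i$ is obtained by a substitution instance of one of the rules of $\Deltamc$, and there are four kinds of rule to consider: the inherited (tagged) rules from $\overline{\Delta}_k$ and from $\overline{\Delta}_{3-k}$, the lifting rule $\LFT$, the co-lifting rule $\cLFT$, and the falsum-propagation rules $\FX$.

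The key case is the co-lifting rule: here $\chi_i=\suball{\varphi}{\ell}$ for some earlier formula $\varphi$ in the derivation and some $\ell\in\{1,2\}$. If $\ell=k$ then $\suball{\chi_i}{k}=\suball{\suball{\varphi}{k}}{k}=\suball{\varphi}{k}=\chi_i$ is already a $k$-formula, and I need $\der_k\suball{\varphi}{k}$, which is exactly the induction hypothesis applied to $\varphi$; if $\ell=3-k$ then $\chi_i$ is a $(3-k)$-formula, its $k$-projection replaces every combined constructor by a $\mverum{n}$-constructor of component $k$, hence $\suball{\chi_i}{k}$ is a formula equivalent to $\lverum$ built only from $\mverum{n}_k$ constructors, and is therefore a theorem of $\cL_k$ by the standing assumption that $\mverum{n}(\cdots)$ is equivalent to $\lverum$ and $\der\lverum$. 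The lifting rule case is easy since its conclusion $\varphi$ has $\suball{\varphi}{k}$ as one of its premises, so the induction hypothesis for that premise gives the claim directly; for a substitution instance one applies Proposition~\ref{prop:subsprojs} to commute projection past the substitution. The falsum-propagation rules give $\suball{\lfalsum_\ell}{k}$, which is either $\lfalsum_k$ (whose premise $\lfalsum_{3-k}$ projects to $\lfalsum_k$ — but note here we actually get the premise $\suball{\lfalsum_\ell}{k}$ available by induction, and both project consistently), or is again a $\mverum{}$-built theorem; in any event the conclusion's $k$-projection is derivable from the premise's $k$-projection using the $\cL_k$-facts $\lfalsum_k\der\varphi$ and $\der\lverum$.

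For the inherited-rule case, suppose $\chi_i$ comes from a substitution instance of a rule $\bar{r}\in\overline{\Delta}_k$, say with premises $\theta_1,\dots,\theta_m$ and conclusion $\theta$, instantiated by a substitution $\rho:\Xi\to\Lmc(\Xi)$. Let $\rho_k$ be the substitution $\xi\mapsto\suball{\rho(\xi)}{k}$ into $L_k(\Xi)$. Since the rule $\bar r$ is (a tagged version of) a rule of $\Delta_k$ built only from $\cL_k$-constructors, its formulas $\theta_j,\theta$ already live in $L_k(\Xi)$, so by Proposition~\ref{prop:subsprojs}, $\suball{\rho(\theta_j)}{k}=\rho_k(\theta_j)$ and $\suball{\rho(\theta)}{k}=\rho_k(\theta)$. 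By the induction hypothesis $\der_k\rho_k(\theta_j)$ for each $j$, and since $\bar r$ is a rule of $\Delta_k$ (modulo the tagging, which only specialises a liberal rule and so is still a derived rule of $\Delta_k$), applying it under $\rho_k$ yields $\der_k\rho_k(\theta)=\suball{\chi_i}{k}$. For a rule $\bar r\in\overline{\Delta}_{3-k}$, the formulas involved are built from $\cL_{3-k}$-constructors, so their $k$-projections are $\mverum{}$-formulas equivalent to $\lverum$ and the conclusion's projection is again a $\cL_k$-theorem outright, irrespective of the premises. The main obstacle, and the place requiring the most care, is precisely the bookkeeping around tagged liberal rules and the interaction between tagging and substitution: one must check that a tagged rule instance in $\Deltamc$, when projected to component $k$, really does correspond to a legitimate instance of the original (possibly liberal) rule of $\Delta_k$, so that derivability in $\cL_k$ is genuinely available; this is where Proposition~\ref{prop:subsprojs} and the precise definition of $\rho_{r,c}$ do the work.
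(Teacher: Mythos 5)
Your overall strategy coincides with the paper's: induction on the length of a derivation in $\Deltamc$, using Proposition~\ref{prop:subsprojs} to commute projection with substitution in the inherited-rule cases, the observation that the $k$-projection of a formula headed by a constructor of the other component is headed by $\mverum{n}_k$ and hence a theorem, and the induction hypothesis for $\LFT$ and $\cLFT$. All of those cases are handled correctly and essentially as in the paper.

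The falsum-propagation case, however, is not. Consider $\chi_i=\lfalsum_2$ obtained from $\lfalsum_1$ by $\FX$; you must show $\der_2\suball{\lfalsum_2}{2}$, i.e.\ $\der_2\lfalsum_2$. Your claim that ``the premise $\lfalsum_{3-k}$ projects to $\lfalsum_k$'' is false: since $\lfalsum_1$ abbreviates $\mc{\lfalsum_1\lverum_2}$, its $2$-projection is $\lverum_2$, not $\lfalsum_2$, so the induction hypothesis on the premise gives only the useless $\der_2\lverum_2$, and the fact $\lfalsum\der\varphi$ cannot be invoked because you never obtain $\der_2\lfalsum_2$ as a hypothesis. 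Indeed $\der_2\lfalsum_2$ is outright impossible for a sound $\cL_2$ (no matrix satisfies $\lfalsum$ and $\cM_2\neq\emptyset$), so no direct derivation of the conclusion's projection can exist. The correct resolution — and the one the paper uses — is that this case is vacuous: if $\lfalsum_1$ occurred earlier in the derivation then $\dermc\lfalsum_1$, contradicting Proposition~\ref{prop:thmfalsum}. This is also the one place where the soundness hypothesis is genuinely load-bearing; your write-up never invokes soundness, which is the symptom of the gap.
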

\begin{proof} Let $\psi_1 \dots \psi_n$ be a derivation of $\psi$ in $\mc{\cL_1 \cL_2}$. Observe that $\psi$ and each $\psi_i$ for $i=1,\ldots,n-1$ are not schema variables by Proposition~\ref{prop:thmfalsum}. We prove the result by induction on $n$:\\[2mm]
Base.~The formula $\psi$ is an instance of an axiom $\alpha'$ in $\mc{\cL_1\cL_2}$  inherited from an axiom $\alpha$ either in $L_1(\Xi)$ or in $L_2(\Xi)$, by a substitution $\rho': \Xi \to \Lmc(\Xi)$. We now show that $\der_1\suball{\psi}{1}$ (we omit the proof of $\der_2\suball{\psi}{2}$ since it follows similarly). Consider two cases:\\[1mm]
(a)~$\alpha$ is in $L_1(\Xi)$. Let $\rho_1$ be a substitution over $L_1(\Xi)$ such that $\alpha'$ is $\rho_1(\alpha)$, and let $\rho'_1$ be a substitution over $L_1(\Xi)$ such that $\rho'_1(\xi)=\suball{\rho'(\xi)}{1}$ for every $\xi \in \Xi$. Then 
$\suball{\psi}{1}=\suball{\rho'(\rho_1(\alpha))}{1}=\rho'_1(\rho_1(\alpha))$, by Proposition~\ref{prop:subsprojs}. Hence, $\suball{\psi}{1}$ is an instance of $\alpha$ by $\rho'_1\circ\rho_1$, and, so, $\der_1\suball{\psi}{1}$.\\[1mm]
(b)~$\alpha$ is in $L_2(\Xi)$. Then, the head constructor of $\alpha'$ and, so, of $\psi$ is from $L_2(\Xi)$. Hence, the head constructor of $\suball{\psi}{1}$ is of the form $\mverum{n}$ for some $n$, and, so, $\der_1\suball{\psi}{1}$.\\[1mm]
Step. There are several cases to consider:\\[1mm]
(1)~$\psi$ results from $\psi_{i_1} \dots \psi_{i_m}$ by a rule 
$r'$ in $\mc{\cL_1\cL_2}$ inherited from a rule $r=\alpha_1\dots\alpha_m\;/\;\beta$ either in $\cL_1$ or in $\cL_2$, and by a substitution $\rho': \Xi \to \Lmc(\Xi)$. We now show that $\der_1\suball{\psi}{1}$ (we omit the proof of $\der_2\suball{\psi}{2}$ since it follows similarly). Consider two cases:\\[1mm]
(a)~$r$ is in $\cL_1$. Let $\rho'_1$ be a substitution over $L_1(\Xi)$ such that $\rho'_1(\xi)=\suball{\rho'(\xi)}{1}$ for every $\xi \in \Xi$ and $\rho_1$ a substitution over $L_1(\Xi)$ such that $r'$ is the rule $\rho_1(\alpha_1)\dots\rho_1(\alpha_m)\;/\;\rho_1(\beta)$. Hence, for $j=1,\dots,m$,
$$\der_1\suball{\rho'(\rho_1(\alpha_j))}{1}$$
by induction hypothesis, and, so,
$$\der_1(\rho'_1\circ\rho_1)(\alpha_j)$$
since $(\rho'_1\circ\rho_1)(\alpha_j)=\suball{\rho'(\rho_1(\alpha_j)}{1}$ by Proposition~\ref{prop:subsprojs}. Therefore, by rule $r$,
$$\der_1 (\rho'_1\circ\rho_1)(\beta),$$
and, so, the thesis follows  since $(\rho'_1\circ\rho_1)(\beta)=\suball{\rho'(\rho_1(\beta))}{1} = \suball{\psi}{1}$, by Proposition~\ref{prop:subsprojs}.\\[1mm]
(b)~$r$ is in $\cL_2$. Then, the head constructor of $\psi$ is in $\Sigma_2$. Hence, the head constructor of $\suball{\psi}{1}$ is of the form $\mverum{n}$ for some $n$, and, so, $\der_1\suball{\psi}{1}$.\\[1mm]
(2)~$\psi$ is obtained from $\suball{\psi}{1}$ and $\suball{\psi}{2}$ by rule $\LFT$. We now show that $\der_1\suball{\psi}{1}$ (we omit the proof of $\der_2\suball{\psi}{2}$ since it follows similarly). Observe that, by the induction hypothesis,
$\der_1 \suball{(\suball{\psi}{1})}{1}$. So, the thesis follows since
$\suball{(\suball{\psi}{1})}{1}$ is $\suball{\psi}{1}$.\\[2mm]
(3)~$\psi$ is $\suball{\psi_j}{1}$ and is obtained from $\psi_j$ using rule $\cLFT$. Then
$$\suball{\psi}{1}=\suball{\suball{\psi_j}{1}}{1}=\suball{\psi_j}{1}=\psi$$
Observe that, by the induction hypothesis, $\der_1\suball{\psi_j}{1}$, that is, $\der_1\suball{\psi}{1}$ as we wanted to show. On the other hand
$$\suball{\psi}{2}=\suball{\suball{\psi_j}{1}}{2}=\mverum{n}(\psi_1,\ldots,\psi_n)$$
for some $n\in\nats$ and formulas $\psi_1,\ldots,\psi_n$. Hence, $\der_2\suball{\psi}{2}$ as we wanted to show. 
The proof when $\psi$ is $\suball{\psi_j}{2}$  is similar.\\[2mm]
(4)~$\psi$ is $\lfalsum_1$ and is obtained using rule $\FX$. This case is not possible due to Proposition~\ref{prop:thmfalsum}.
Similarly when $\psi$ is $\lfalsum_2$.
\end{proof}

The following result asserts that a rule is admissible in the logic resulting from a meet-combination whenever both of its projections are admissible rules in the component logics.

\begin{pth}\label{th:preseradmissibambososlados} \em
Let $\cL_1$ and $\cL_2$ be sound  logics and $\alpha_1,\ldots,\alpha_m,\beta$ formulas of $\Lmc$ such that $\suball{\alpha_1}{1}\ldots\suball{\alpha_m}{1}\; / \; \suball{\beta}{1}$ and $\suball{\alpha_1}{2}\ldots\suball{\alpha_m}{2}\; / \; \suball{\beta}{2}$ are admissible rules of $\cL_1$ and $\cL_2$, respectively. Then, $$\alpha_1\ldots\alpha_m\;/\;\beta$$ is an admissible rule of $\mc{\cL_1\cL_2}$. 
\end{pth}
\begin{proof}
Let $\sigma$ be a substitution over $\mc{\cL_1\cL_2}$ such that
$$\dermc\sigma(\alpha_1)\quad\ldots\quad\dermc\sigma(\alpha_m).$$
Then, by Proposition~\ref{prop:reflectdersub}, 
$$\der_1\suball{\sigma(\alpha_1)}{1}\quad\ldots\quad\der_1\suball{\sigma(\alpha_m)}{1}$$
and
$$\der_2\suball{\sigma(\alpha_1)}{2}\quad\ldots\quad\der_2\suball{\sigma(\alpha_m)}{2}.$$
Let $\sigma_1$ and $\sigma_2$ be substitutions over $L_1(\Xi)$ and $L_2(\Xi)$ respectively, such that $\sigma_1(\xi)=\suball{\sigma(\xi)}{1}$ and $\sigma_2(\xi)=\suball{\sigma(\xi)}{2}$ for every schema variable $\xi$. Then,
$$\der_1\sigma_1(\suball{\alpha_1}{1})\quad\ldots\quad\der_1\sigma_1(\suball{\alpha_m}{1})$$
and
$$\der_2\sigma_2(\suball{\alpha_1}{2})\quad\ldots\quad\der_2\sigma_2(\suball{\alpha_m}{2})$$
by Proposition~\ref{prop:subsprojs}, and, so,
$$\der_1\sigma_1(\suball{\beta}{1})$$
and
$$\der_2\sigma_2(\suball{\beta}{2})$$
since $\suball{\alpha_1}{1}\ldots\suball{\alpha_m}{1}\; / \; \suball{\beta}{1}$ and $\suball{\alpha_1}{2}\ldots\suball{\alpha_m}{2}\; / \; \suball{\beta}{2}$ are admissible rules of $\cL_1$ and $\cL_2$, respectively. So,
$$\dermc\sigma_1(\suball{\beta}{1})$$
and
$$\dermc\sigma_2(\suball{\beta}{2})$$
taking into account that the logic resulting from the meet-combination is an extension of the component logics. Hence,
$$\dermc\suball{\sigma(\beta)}{1}$$
and
$$\dermc\suball{\sigma(\beta)}{2},$$
and, so,
$$\dermc\sigma(\beta)$$
by co-lifting, as we wanted to show.
\end{proof}

Taking into account that, in the meet-combination, the formula resulting from the projection to the other component logic of the conclusion of a tagged rule  is always a theorem of that logic, we can establish immediately the following corollary, capitalising on Theorem~\ref{th:preseradmissibambososlados}.

\begin{corollary}\label{cor:preseradmiss} \em
Let $\cL_1$ and $\cL_2$ be sound logics and $r$ an admissible rule of $\cL_k$, where $k \in \{1,2\}$. Then, each rule in $\overline{r}$ is an admissible rule of $\mc{\cL_1\cL_2}$. 
\end{corollary}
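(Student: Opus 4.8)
The plan is to reduce the statement to Theorem~\ref{th:preseradmissibambososlados} by checking that for any rule $s = \alpha_1\ldots\alpha_m\;/\;\beta$ in $\overline{r}$, both projections $\suball{\alpha_1}{1}\ldots\suball{\alpha_m}{1}\;/\;\suball{\beta}{1}$ and $\suball{\alpha_1}{2}\ldots\suball{\alpha_m}{2}\;/\;\suball{\beta}{2}$ are admissible in $\cL_1$ and $\cL_2$ respectively. Assume without loss of generality that $r$ is admissible in $\cL_1$ (the case $k=2$ is symmetric). I will split according to whether $r$ is liberal or non-liberal.

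If $r$ is non-liberal, then $\overline{r} = \{r\}$, so $s = r$ viewed inside $\Lmc(\Xi)$ via the embedding $\eta_1$. Since each $\alpha_i$ and $\beta$ then lie in $L_1(\Xi)$, we have $\suball{\alpha_i}{1} = \alpha_i$ and $\suball{\beta}{1} = \beta$, so the projection to $1$ is just $r$, which is admissible in $\cL_1$ by hypothesis. For the projection to $2$: every formula of $L_1(\Xi)$ has a head constructor from $\Sigma_1$ (or is a schema variable), and projecting to the second component replaces each such combined constructor $\mc{c_1\mverum{n}_2}$ by $\mverum{n}_2$; in particular $\suball{\beta}{2}$ is of the form $\mverum{n}(\ldots)$ (or, if $\beta$ is a variable, $\beta$ itself is a variable, which cannot happen since $r$ is non-liberal). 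By our standing assumption that $\mverum{n}(\varphi_1,\dots,\varphi_n)$ is equivalent to $\lverum$, we get $\der_2 \suball{\beta}{2}$, so the projection to $2$ is trivially admissible in $\cL_2$ (its conclusion is a theorem). Hence Theorem~\ref{th:preseradmissibambososlados} applies and $s = r$ is admissible in $\mc{\cL_1\cL_2}$.

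If $r = \alpha_1\ldots\alpha_m\;/\;\beta$ is liberal (so $\beta$ is a schema variable, say $\beta = \xi$), then $\overline{r}$ consists of the rules $s_{c} = \rho_{r,c}(\alpha_1)\ldots\rho_{r,c}(\alpha_m)\;/\;c(\xi_{j+1},\dots,\xi_{j+n})$, one for each $c\in\Sigma_{1n}$ and $n\in\nats$ (here $\Sigma$ is read inside $\mc{\cL_1\cL_2}$, and via $\eta_1$ the relevant constructors are the combined ones $\mc{c_1\mverum{n}_2}$). For the projection of $s_c$ to $1$: the premises $\rho_{r,c}(\alpha_i)$ have projections $\suball{\rho_{r,c}(\alpha_i)}{1}$, which, since $\alpha_i\in L_1(\Xi)$ and $\rho_{r,c}$ only substitutes a $\Sigma_1$-headed formula for one variable, equal $(\rho_{r,c})_1(\alpha_i)$ where $(\rho_{r,c})_1$ is the corresponding substitution over $L_1(\Xi)$ (using Proposition~\ref{prop:subsprojs}); similarly the conclusion projects to $c_1(\xi_{j+1},\dots,\xi_{j+n})$. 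Thus the projection of $s_c$ to $1$ is a substitution instance of the $c_1$-tagged version of $r$ over $\cL_1$, which belongs to $\overline{r}$ as computed in $\cL_1$ — and any such tagged rule is admissible in $\cL_1$ whenever $r$ is (this is the standard fact that tagging preserves admissibility, or one checks directly: if $\der_1\sigma(\rho_{r,c_1}(\alpha_i))$ for all $i$, and since $\rho_{r,c_1}$ fixes all variables except $\xi$, we can take $\tau = \sigma\circ\rho_{r,c_1}$, apply admissibility of $r$ to conclude $\der_1\tau(\xi) = \der_1\sigma(c_1(\ldots))$). For the projection of $s_c$ to $2$: the conclusion projects to $\mverum{n}(\ldots)$, which is a theorem of $\cL_2$, so that projection is admissible in $\cL_2$ for the trivial reason. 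Theorem~\ref{th:preseradmissibambososlados} then gives admissibility of $s_c$ in $\mc{\cL_1\cL_2}$.

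The only mildly delicate point — the "main obstacle" — is the bookkeeping in the liberal case: verifying carefully that the projection to component $1$ of a $c$-tagged rule of $r$ over $\mc{\cL_1\cL_2}$ is (an instance of) a $c_1$-tagged rule of $r$ over $\cL_1$, and that such a tagged rule inherits admissibility from $r$. Both reduce to Proposition~\ref{prop:subsprojs} together with the observation that the substitution $\rho_{r,c}$ touches only the single variable that is the conclusion of $r$, so composing it with an arbitrary substitution still lets us invoke admissibility of $r$ itself. Once that is in hand, the corollary is immediate from Theorem~\ref{th:preseradmissibambososlados}.
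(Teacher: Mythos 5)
Your proposal is correct and follows exactly the route the paper intends: the projection to component $k$ of a rule in $\overline{r}$ is $r$ itself (non-liberal case) or a tagged variant of $r$ that inherits admissibility by precomposing substitutions (liberal case), while the projection to the other component has a conclusion of the form $\mverum{n}(\ldots)$ and is therefore trivially admissible, so Theorem~\ref{th:preseradmissibambososlados} applies. The paper states this only as a one-sentence remark; your write-up supplies the bookkeeping (in particular the liberal case) that the paper leaves implicit.
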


The following result states that a rule is admissible in the logic resulting from the meet-combination whenever its  projection to one of the components
is vacuously admissible. Moreover, when that is the case, its conclusion can be any formula.

\begin{pth}\label{th:preseradmissfalsum} \em
Let $\cL_1$ and $\cL_2$ be sound logics, $\alpha_1,\ldots,\alpha_m,\beta$ formulas of $\Lmc$, and $k$ in $\{1,2\}$. Then,
$$\suball{\alpha_1}{k}\ldots\suball{\alpha_m}{k}\; / \; \lfalsum_k\text{ is an admissible rule of }\cL_k$$
$$\Rightarrow$$
$$\alpha_1\ldots\alpha_m\; / \;\beta\text{ is an admissible rule of }\mc{\cL_1\cL_2}.$$
\end{pth}
\begin{proof} Assume without loss of generality that $k$ is $1$. Suppose, by contradiction, that 
$\sigma$  is a substitution over $\mc{\cL_1\cL_2}$ such that
$$\dermc\sigma(\alpha_1)\quad\ldots\quad\dermc\sigma(\alpha_m).$$
Then, by Proposition~\ref{prop:reflectdersub}, 
$$\der_1\suball{\sigma(\alpha_1)}{1}\quad\ldots\quad\der_1\suball{\sigma(\alpha_m)}{1}.$$
Let $\sigma_1$ be a substitution over $L_1(\Xi)$ such that $\sigma_1(\xi)=\suball{\sigma(\xi)}{1}$ for every schema variable $\xi$. Then,
$$\der_1\sigma_1(\suball{\alpha_1}{1})\quad\ldots\quad\der_1\sigma_1(\suball{\alpha_m}{1})$$
by Proposition~\ref{prop:subsprojs}, and, so,
$$\der_1\lfalsum_1$$
which is a contradiction. Therefore, the admissibility of the rule $\alpha_1\ldots\alpha_m\; / \;\beta$ holds vacuously. 
\end{proof}

%%%%%%%%%%%%%%%%%%%%%%%%%%%%%%%%%%%%
\subsubsection*{Meet-combination of $\IPL$ and $\SQT$, and of $\GL$ and $\SQT$}

We now illustrate the results above concerning preservation of admissibility by the meet-combination. 

The Harrop rule 
$$({\lneg}_\IPL \xi_1) \limp_\IPL (\xi_2 \ldisj_\IPL \xi_3) \; / \; ({\lneg}_\IPL \xi_1 \limp_\IPL \xi_2) \ldisj_\IPL ({\lneg}_\IPL \xi_1 \limp_\IPL \xi_3)$$
is admissible in $\IPL$ and so by Corollary~\ref{cor:preseradmiss}, it is also admissible in $\mc{\IPL\,\SQT}$. 

Moreover, the rule
$$({\lposs}_\GL \xi) \lconj_\GL ({\lposs}_\GL {\lneg}_\GL \xi) \; / \; {\lfalsum}_\GL$$
is derivable and so is admissible in $\GL$. Since
$$(\mathop{{\lposs}_{\SQT}} \xi) \mathbin{\lconj_{\SQT}} (\mathop{{\lposs}_{\SQT}}\mathop{{\lneg}_{\SQT}} \xi) \; / \; {\lfalsum}_{\SQT}$$
is admissible in $\SQT$ then, by Theorem~\ref{th:preseradmissibambososlados}, the rule
%$$\mc{{\lposs}_\GL {\lposs}_{\SQT}} \xi \mc{{\lconj}_\GL {\lconj}_{\SQT}} (\mc{{\lposs}_\GL {\lposs}_{\SQT}}\mc{{\lneg}_\GL {\lneg}_{\SQT}} \xi) \; / \; 
%\mc{{\lfalsum}_\GL {\lfalsum}_{\SQT}}$$
%
%$${{\lposs}}{{\lposs}} \xi \stackrel{{\lconj}}{{\lconj}} \left(\stackrel{{\lposs}}{{\lposs}}\stackrel{{\lneg}}{{\lneg}} \xi\right) \; / \; 
%\stackrel{{\lfalsum}}{{\lfalsum}}$$

$$
\left(
\bmcu{\lposs_\GL}{\lposs_\SQT} \xi
\right)
\bmcb{\lconj_\GL}{\lconj_\SQT}
\left(
\bmcu{\lposs_\GL}{\lposs_\SQT}
\bmcu{\lneg_\GL}{\lneg_\SQT}
\xi\right) \; \mathbin{\bigg /} \; 
\bmcu{\lfalsum_\GL}{\lfalsum_\SQT}
$$
is admissible in $\mc{\GL\,\SQT}$.

Furthermore, since
$$({\lposs}_{\SQT} \xi)\lconj_{\SQT} ({\lposs}_{\SQT}{\lneg}_{\SQT} \xi) \; / \; {\lfalsum}_{\SQT}$$
is admissible in $\SQT$ then, by Theorem~\ref{th:preseradmissfalsum},
the rule
$$\left(\bmcu{{\lneg}_\GL}{\lposs_{\SQT}} \xi\right) \bmcb{\ldisj_\GL} {\lconj_\SQT} \left(\bmcu{\lneg_\GL} {\lposs_\SQT}\bmcu{\lposs_\GL}{\lneg_\SQT} \xi\right) \; \mathbin{\bigg / }\; 
\bmcu{\lverum_\GL} {\lfalsum_\SQT}$$
is admissible in $\mc{\GL\,\SQT}$.

%%%%%%%%%%%%%%%%%%%%%%%%%%%%%%%%%%%%
%%%%%%%%%%%%%%%%%%%%%%%%%%%%%%%%%%%%
\subsection{Conservativeness}

The conservative preservation results rely on being able to reproduce the structure of a formula of one of the components in a formula of the other component. For this purpose we work with the decomposition tree of a formula as well as with structure preserving embeddings. 

Given a logic $\cL$, by the \emph{decomposition tree} of a formula $\varphi$ in $L(\Xi)$, denoted by
$$t(\varphi),$$
we mean a rooted tree having as vertices the subformulas of $\varphi$ and as edges the pairs $\langle c(\varphi_1,\ldots,\varphi_m),\varphi_i\rangle$ for each subformula $c(\varphi_1,\ldots,\varphi_m)$ of $\varphi$ and $i=1,\ldots,m$. The source of edge $\langle c(\varphi_1,\ldots,\varphi_m),\varphi_i\rangle$ is $c(\varphi_1,\ldots,\varphi_m)$ and the target is $\varphi_i$.

An \emph{embedding} $h$ of a tree $t_1$ into a tree $t_2$, represented by
$t_1\hookrightarrow_h t_2,$
is a pair composed of an injective map $h_v$ from the vertices of $t_1$ into the vertices of $t_2$ and an injective map $h_e$ from the edges of $t_1$ into the edges of $t_2$ such that for any edge $e_1$ of $t_1$:
$h_v(\emph{source}(e_1))=\emph{source}(h_e(e_1))$, 
$h_v(\emph{target}(e_1))=\emph{target}(h_e(e_1))$ and $\emph{outdegree}(\emph{source}(e_1))=\emph{outdegree}(\emph{source}(h_e(e_1)))$.
We use
$$t(\varphi_1)\approx t(\varphi_2)$$
for asserting that there is an embedding of $t(\varphi_1)$ into $t(\varphi_2)$ and an embedding of $t(\varphi_2)$ into $t(\varphi_1)$.

As we already pointed out we need additional conditions on the component logics in order to prove reflection results. Namely, we assume that the component logics {\it have equivalence}. That is, they have a binary constructor $\leqv$ such that $\der \varphi \leqv \varphi$, 
$\der \varphi \leqv \varphi' $ implies $\der \varphi' \leqv \varphi$, $\der \varphi \leqv \varphi'$ and $\der \varphi' \leqv \varphi''$ implies 
$\der \varphi \leqv \varphi''$, 
$$\der \varphi_k \leqv \varphi'_k, \text{ for  } k=1,\dots,n \quad \Rightarrow \quad \der c(\varphi_1,\dots,\varphi_n) \leqv 
c(\varphi'_1,\dots,\varphi'_n)$$ for each $n$-ary constructor $c$, and $$\der \varphi \text{ and } \der \varphi \leqv \varphi' \quad \Rightarrow \quad \der \varphi'.$$

A logic $\cL$ with equivalence is said to have a constructor $c$ of arity $n$ {\it with identities} for position $k$ if there are $o_1,\ldots,o_{k-1},o_{k+1},\ldots,o_{n}$ in $\Sigma_0$, such that
$$\der c(o_1,\ldots,o_{k-1},\varphi,o_{k+1},\ldots,o_{n}) \leqv \varphi$$
for every formula $\varphi$, where $n \geq 2$ and $1\leq k\leq n$. Such a constructor $c$ is said to have {\it pairwise formula completion} for position $j$ if
for every formula $\psi$ there is a formula $\delta$ such that
$$\der o_j\leqv \delta\quad\text{and}\quad t(\delta)\approx t(\psi)$$
where $j\in\{1,\ldots,k-1,k+1,\ldots,n\}$.

Consider intuitionistic logic. For instance,  $\lconj$ is a binary constructor with identities for position $1$ taking $o_2$ as $\lverum$. Moreover, $\lconj$ has pairwise formula completion for position $2$ as we prove in Proposition~\ref{prop:ttinintermediatehaspairwiseformulacompletion}. As an example of pairwise formula completion, let $\psi$ be the formula
$$p_1 \limp (\lneg p_2).$$ Then $\lverum \ldisj (\lneg \lfalsum)$ is a completion of $\lverum$ for $\psi$. Indeed $\der \lverum \leqv (\lverum \ldisj (\lneg \lfalsum))$
and $t(\lverum \ldisj (\lneg \lfalsum)) \approx t(p_1 \limp (\lneg p_2))$.

For conservative preservation of admissibility we need each component logic to have a constructor with the above properties but with complementary requirements.

We say that logics $\cL$ and $\cL'$ have \emph{complementary  constructors with identities} whenever 
\begin{itemize}
	\item $\cL$ has a $n$-ary constructor $c$ with identities for position $k$ with pairwise formula completion for position $k'$;
	\item $\cL'$ has a $n$-ary constructor $c'$ with identities for  position $k'$ with pairwise formula completion for position $k$;
\item $k\neq k'$.
\end{itemize}
Finally, we say that $\cL$ and $\cL'$ have \emph{similar signatures} whenever $\Sigma_i=\emptyset$ iff $\Sigma'_i=\emptyset$ for $i\in\nats$.

Putting together the requirements above, we now prove that given a formula in each of the component logics, it is always possible to find equivalent formulas with the same decomposition tree.

\begin{prop}\label{prop:formsequivsemsfl}\em
Let $\cL_1$ and $\cL_2$ be logics with equivalence, similar signatures and complementary constructors with identities. Then, for each pair of formulas $\varphi_1$ in $L_1(\Xi)$ and $\varphi_2$ in $L_2(\Xi)$ there is a pair of formulas  $\varphi'_1$ in $L_1(\Xi)$ and $\varphi'_2$ in $L_2(\Xi)$ such that
\begin{itemize}
	\item $\der_1 \varphi_1\leqv\varphi'_1$;
	\item $\der_2 \varphi_2\leqv\varphi'_2$;
	\item $t(\varphi'_1)\approx t(\varphi'_2)$.
\end{itemize}
\end{prop}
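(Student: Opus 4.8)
The plan is to build $\varphi'_1$ and $\varphi'_2$ simultaneously by a single induction on the combined structure of $\varphi_1$ and $\varphi_2$, using the complementary constructors with identities to ``pad'' whichever formula is shorter at each node so that the two decomposition trees stay in lockstep. Write $c$ for the $n$-ary constructor of $\cL_1$ with identities for position $k$ and pairwise formula completion for position $k'$, and $c'$ for the $n$-ary constructor of $\cL_2$ with identities for position $k'$ and pairwise formula completion for position $k$, with $k\neq k'$. The similar-signatures hypothesis guarantees that whenever one of $\varphi_1$, $\varphi_2$ has a head constructor of some arity, the other logic also has a constructor of that arity, so the recursion never gets stuck for lack of a connective on one side.

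\textbf{Induction.} The induction is on $\mathrm{height}(t(\varphi_1))+\mathrm{height}(t(\varphi_2))$. In the base case both formulas are atomic (a schema variable or a nullary constructor); then $t(\varphi_1)$ and $t(\varphi_2)$ are both single-vertex trees, which are trivially $\approx$-equivalent, and we take $\varphi'_i=\varphi_i$ using reflexivity of $\leqv$. For the step, suppose $\varphi_1=d(\psi_1,\dots,\psi_p)$ with $p\geq 1$ and $\varphi_2$ arbitrary. First rewrite $\varphi_1$ as $c(o_1,\dots,o_{k-1},d(\psi_1,\dots,\psi_p),o_{k+1},\dots,o_n)$, which is $\leqv$-equivalent to $\varphi_1$ by the identity property of $c$, and then use pairwise formula completion of $c$ for position $k'$ to replace the constant $o_{k'}$ by a formula $\delta$ with $\der_1 o_{k'}\leqv\delta$ and $t(\delta)\approx t(\text{something on the $\cL_2$ side})$; symmetrically, wrap $\varphi_2$ in $c'$ at position $k'$ and complete its position $k$ so that position $k$ of $c'$ carries a formula whose tree matches that of $d(\psi_1,\dots,\psi_p)$. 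The congruence property of $\leqv$ (the clause $\der\varphi_j\leqv\varphi'_j\Rightarrow\der c(\dots)\leqv c(\dots)$) propagates all these local rewrites up to the root. One then applies the induction hypothesis to the genuinely smaller subformula pairs sitting in the matched positions, and assembles $\varphi'_1$, $\varphi'_2$ from the results; $t(\varphi'_1)\approx t(\varphi'_2)$ follows by gluing together the embeddings coming from the subtrees with the fixed top two levels, noting that outdegree is preserved because $c$ and $c'$ both have arity $n$.

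\textbf{Main obstacle.} The delicate point is bookkeeping the embeddings so that the two padded trees really are mutually embeddable and not merely ``similar in shape.'' The subtlety is that when we pad $\varphi_1$ with $c$ we introduce, at position $k'$, a subtree $t(\delta)$ that must itself be $\approx$-matched by a subtree appearing in the padded $\varphi_2$ — but on the $\cL_2$ side that position is exactly where $c'$ has its identity, so it is occupied by a constant; pairwise formula completion of $c'$ for position $k$ (not $k'$) does not directly help there. The resolution is to observe that the padding is symmetric: the completion formula $\delta$ on the $\cL_1$ side is chosen to mirror a constant on the $\cL_2$ side, and vice versa, and constants give single-vertex trees which embed into anything of matching outdegree; one has to be careful that the recursion is applied only to the ``real'' subformulas $\psi_1,\dots,\psi_p$ and their counterparts, while the padded slots are handled by the base-case reasoning. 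Making this precise — probably by strengthening the induction hypothesis to produce, along with $\varphi'_1$ and $\varphi'_2$, an explicit mutual embedding — is where the real work lies; the rest is routine use of the equivalence axioms.
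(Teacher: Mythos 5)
There is a genuine gap here, and it is one you half-acknowledge yourself when you write that ``making this precise \ldots{} is where the real work lies.'' The induction you set up is never actually carried out: after wrapping $\varphi_1$ in $c$ at its identity position $k$ and $\varphi_2$ in $c'$ at its identity position $k'$, there are no ``genuinely smaller subformula pairs'' left to recurse on --- the \emph{whole} of $\varphi_1$ sits at position $k$ of $c$ and the \emph{whole} of $\varphi_2$ at position $k'$ of $c'$, so your measure $\mathrm{height}(t(\varphi_1))+\mathrm{height}(t(\varphi_2))$ does not decrease and the recursive call is never specified. Worse, your ``main obstacle'' rests on a misreading of the definitions: the identity position $k'$ of $c'$ is the slot occupied by $\varphi_2$ itself (the constants $o_j$ occupy the \emph{non}-identity positions), so the completion formula $\delta$ placed at position $k'$ of the padded $\varphi_1$ is matched by $\varphi_2$, not by a constant. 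Your proposed repair --- that constants give single-vertex trees which ``embed into anything of matching outdegree'' --- would fail in any case, because $\approx$ demands embeddings in both directions and a non-trivial tree cannot inject into a single vertex. A further elision: pairwise formula completion in $\cL_1$ must be fed a formula of $L_1(\Xi)$, so one must first manufacture a shape-copy of $\varphi_2$ inside $L_1(\Xi)$; this is exactly what the similar-signatures hypothesis is for, and you invoke that hypothesis only informally.

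The paper's proof shows that no induction on the formulas is needed at all. Using similar signatures, choose $\psi_1^{\varphi_2}\in L_1(\Xi)$ with $t(\psi_1^{\varphi_2})\approx t(\varphi_2)$ and $\psi_2^{\varphi_1}\in L_2(\Xi)$ with $t(\psi_2^{\varphi_1})\approx t(\varphi_1)$; apply pairwise formula completion once on each side to obtain $\delta_1^{\varphi_2}$ with $\der_1 o_1^{k_2}\leqv\delta_1^{\varphi_2}$ and $t(\delta_1^{\varphi_2})\approx t(\varphi_2)$, and $\delta_2^{\varphi_1}$ with $\der_2 o_2^{k_1}\leqv\delta_2^{\varphi_1}$ and $t(\delta_2^{\varphi_1})\approx t(\varphi_1)$; then set $\varphi'_1=c_1(\ldots,\varphi_1,\ldots,\delta_1^{\varphi_2},\ldots)$ and $\varphi'_2=c_2(\ldots,\delta_2^{\varphi_1},\ldots,\varphi_2,\ldots)$ with constants in the remaining slots. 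The equivalences follow from the identity property together with congruence and transitivity of $\leqv$, and $t(\varphi'_1)\approx t(\varphi'_2)$ follows by matching the $n$ arguments position by position (formula against shape-copy at positions $k_1$ and $k_2$, constant against constant elsewhere). Your sketch contains these ingredients, but as written it neither defines the recursion it claims to perform nor supplies the mutual embedding it identifies as the crux.
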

\begin{proof} Let $c_1$ be a $n$-ary constructor in $\Sigma_1$ with identities for position $k_1$ and $c_2$ a $n$-ary constructor in $\Sigma_2$ with identities for position $k_2$ such that $k_1\neq k_2$, $c_1$ has pairwise formula completion for $k_2$ and $c_2$ has pairwise formula completion for $k_1$. Let $\psi_1^{\varphi_2}$ be a formula of $L_1(\Xi)$ and $\psi_2^{\varphi_1}$ a formula of $L_2(\Xi)$ such that
$$t(\psi_1^{\varphi_2})\approx t(\varphi_2)\quad\text{and}\quad t(\psi_2^{\varphi_1})\approx t(\varphi_1)$$
which exist since $\cL_1$ and $\cL_2$ have similar signatures. Then, since $c_1$ has pairwise formula completion for $k_2$, there are  formula $\delta^{\varphi_2}_1$ of $L_1(\Xi)$ and $o^{k_2}_1$ in $(\Sigma_1)_0$ with
$$\der_1 o^{k_2}_1\leqv\delta^{\varphi_2}_1\quad\text{and}\quad t(\delta^{\varphi_2}_1)\approx t(\psi^{\varphi_2}_1)$$
and so with $t(\delta^{\varphi_2}_1)\approx t(\varphi_2)$.  Analogously, since $c_2$ has pairwise formula completion for $k_1$, there are  formula $\delta^{\varphi_1}_2$ of $L_2(\Xi)$ and and $o^{k_1}_2$ in $(\Sigma_2)_0$ with 
$$\der_2 o^{k_1}_2\leqv\delta^{\varphi_1}_2\quad\text{and}\quad t(\delta^{\varphi_1}_2)\approx t(\psi^{\varphi_1}_2)$$
and so with $t(\delta^{\varphi_1}_2)\approx t(\varphi_1)$. Assume without loss of generality that $k_1< k_2$. Thus, let $\varphi'_1$ be the formula
$$c_1(o^1_1,\ldots,o^{k_1-1}_1,\varphi_1,o^{k_1+1}_1,\ldots,o^{k_2-1}_1,\delta^{\varphi_2}_1,o^{k_2+1}_1,\ldots,o^{n}_1)$$
and $\varphi'_2$ be the formula
$$c_2(o^1_2,\ldots,o^{k_1-1}_2,\delta^{\varphi_1}_2,o^{k_1+1}_2,\ldots,o^{k_2-1}_2,\varphi_2,o^{k_2+1}_2,\ldots,o^{n}_2).$$
Then
$$\der_1\varphi_1\leqv \varphi'_1\qquad\text{and}\qquad\der_2\varphi_2\leqv \varphi'_2.$$
Moreover, since $t(\varphi_1)\approx t(\delta^{\varphi_1}_2)$ and $t(\delta^{\varphi_2}_1)\approx t(\varphi_2)$ then
$$t(\varphi'_1)\approx t(\varphi'_2).$$
Hence the pair $\varphi'_1$ and $\varphi'_2$ has the required properties.
\end{proof}

The next theorem states that when a rule is admissible in the logic resulting from the meet-combination, then either both of its projections are admissible in the component logics or otherwise necessarily one of its projections is vacuously admissible in one of the component logics.

\begin{pth} \label{th:refladmvac}\em
Let $\cL_1$ and $\cL_2$ be sound logics with equivalence, similar signatures and complementary constructors with identities. Assume  $\alpha_1\ldots\alpha_m \; / \; \beta$ is an admissible rule of $\mc{\cL_1\cL_2}$ such that $\suball{\alpha_1}{j}\ldots\suball{\alpha_m}{j} \; / \; \suball{\beta}{j}$ is not an admissible rule of $\cL_j$ for some $j\in\{1,2\}$. Then,
$$\suball{\alpha_1}{k}\ldots\suball{\alpha_m}{k} \; / \; \lfalsum_k$$
is an admissible rule of $\cL_k$ for $k$ in $\{1,2\}\setminus\{j\}$. 
\end{pth}
\begin{proof} Assume, without loss of generality, that $j$ is $1$, and, so, let $\sigma_1$ be a substitution over $L_1(\Xi)$ such that
$$\der_1\sigma_1(\suball{\alpha_1}{1}),\ldots,\der_1\sigma_1(\suball{\alpha_m}{1})\quad\text{and}\quad\not\der_1\sigma_1(\suball{\beta}{1}).$$
In order to show that $\suball{\alpha_1}{2}\ldots\suball{\alpha_m}{2} \; / \; \lfalsum_2$ is an admissible rule of $\cL_2$ we now show that there is no substitution such that $\der_2\sigma_2(\suball{\alpha_1}{2}),\ldots, \der_2\sigma_2(\suball{\alpha_m}{2})$.
Suppose by contradiction that there is such a substitution, i.e., a substitution $\sigma_2$ over $L_2(\Xi)$ such that 
$$\der_2\sigma_2(\suball{\alpha_1}{2})\quad\ldots\quad\der_2\sigma_2(\suball{\alpha_m}{2}).$$
Taking into account Proposition~\ref{prop:formsequivsemsfl}, let $\sigma_{1\sigma_2}$ and $\sigma_{2\sigma_1}$ be substitutions over $L_1(\Xi)$ and $L_2(\Xi)$, respectively, such that for each schema variable $\xi$, 
$$t(\sigma_{1\sigma_2}(\xi))\approx t(\sigma_{2\sigma_1}(\xi)),\quad \der_1\sigma_1(\xi)\leqv_1\sigma_{1\sigma_2}(\xi)\quad\text{and}\quad\der_2\sigma_2(\xi)\leqv_2\sigma_{2\sigma_1}(\xi).$$
Then, since $\leqv_1$ is an equivalence connective in $\cL_1$ and $\leqv_2$ is an equivalence connective in $\cL_2$, we have
$$\der_1\sigma_{1\sigma_2}(\suball{\alpha_1}{1})\quad\ldots\quad\der_1\sigma_{1\sigma_2}(\suball{\alpha_m}{1}),$$
and
$$\der_2\sigma_{2\sigma_1}(\suball{\alpha_1}{2})\quad\ldots\quad\der_2\sigma_{2\sigma_1}(\suball{\alpha_m}{2}).$$
Let $\sigma$ be a substitution over $\mc{\cL_1\cL_2}$ such that $\suball{\sigma}{1}$ is $\sigma_{1\sigma_2}$ and $\suball{\sigma}{2}$ is $\sigma_{2\sigma_1}$. Since, for each $j=1,\ldots,m$,
$$\sigma_{1\sigma_2}(\suball{\alpha_j}{1})=\suball{\sigma}{1}(\suball{\alpha_j}{1})=\suball{\sigma(\alpha_j)}{1}\quad\text{and}\quad\sigma_{2\sigma_1}(\suball{\alpha_j}{2})=\suball{\sigma}{2}(\suball{\alpha_j}{2})=\suball{\sigma(\alpha_j)}{2},$$
then
$$\der_1\suball{\sigma(\alpha_j)}{1}\quad\text{and}\quad\der_2\suball{\sigma(\alpha_j)}{2},$$
and, so, 
$$\dermc\suball{\sigma(\alpha_j)}{1}\quad\text{and}\quad\dermc\suball{\sigma(\alpha_j)}{2}$$
since the logic resulting from the meet-combination is an extension of each component logic. Then, by lifting,
$$\dermc\sigma(\alpha_1)\quad\ldots\quad\dermc\sigma(\alpha_m),$$
and, so,
$$\dermc\sigma(\beta).$$ Hence, by Proposition~\ref{prop:reflectdersub},
$$\der_1\suball{\sigma(\beta)}{1},$$
that is,
$$\der_1\sigma_{1\sigma_2}(\suball{\beta}{1}),$$
since $\suball{\sigma(\beta)}{1}=\suball{\sigma}{1}(\suball{\beta}{1})=\sigma_{1\sigma_2}(\suball{\beta}{1})$. Taking into account that $\der_1\sigma_1(\xi)\leqv_1\sigma_{1\sigma_2}(\xi)$ 
for each schema variable $\xi$ and that $\leqv_1$ is an equivalence connective in $\cL_1$ we conclude that
$$\der_1\sigma_1(\suball{\beta}{1}),$$
which contradicts the initial assumption that $\not\der_1\sigma_1(\suball{\beta}{1})$.
\end{proof}

So, based on Theorem~\ref{th:refladmvac},
it is immediate to conclude that conservative preservation holds provided that these logics are rich enough. 

\begin{corollary} \label{cor:adminmeetimpliesadminincomp}\em
Let $\cL_1$ and $\cL_2$ be sound logics with equivalence, similar signatures and complementary constructors with identities. Then, 
either in $\cL_1$ the rule $\suball{\alpha_1}{1}\ldots\suball{\alpha_m}{1} \; / \; \suball{\beta}{1}$ is admissible  or in $\cL_2$ the rule $\suball{\alpha_1}{2}\ldots\suball{\alpha_m}{2} \; / \; \suball{\beta}{2}$ is admissible, whenever  $\alpha_1\ldots\alpha_m \; / \; \beta$ is  an admissible rule of $\mc{\cL_1\cL_2}$.
\end{corollary}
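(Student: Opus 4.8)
The corollary is essentially the contrapositive repackaging of Theorem~\ref{th:refladmvac}, so the plan is to argue by contradiction, invoking that theorem together with the consistency of a single sound logic. Suppose $\alpha_1\ldots\alpha_m \; / \; \beta$ is admissible in $\mc{\cL_1\cL_2}$ and, towards a contradiction, that neither $\suball{\alpha_1}{1}\ldots\suball{\alpha_m}{1} \; / \; \suball{\beta}{1}$ is admissible in $\cL_1$ nor $\suball{\alpha_1}{2}\ldots\suball{\alpha_m}{2} \; / \; \suball{\beta}{2}$ is admissible in $\cL_2$. First I would note that the standing hypotheses on $\cL_1,\cL_2$ (soundness, equivalence, similar signatures, complementary constructors with identities) are exactly those required by Theorem~\ref{th:refladmvac}, and that the contradiction hypothesis supplies, in particular, the premise of that theorem with $j=1$: the projection to $\cL_1$ is not admissible.

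Applying Theorem~\ref{th:refladmvac} then yields that $\suball{\alpha_1}{2}\ldots\suball{\alpha_m}{2} \; / \; \lfalsum_2$ is an admissible rule of $\cL_2$. The one remaining step is to promote this vacuous admissibility to admissibility of $\suball{\alpha_1}{2}\ldots\suball{\alpha_m}{2} \; / \; \suball{\beta}{2}$ in $\cL_2$, which contradicts the contrary hypothesis. For this I would reason exactly as in Proposition~\ref{prop:thmfalsum}, but at the level of the single component $\cL_2$: if some substitution $\sigma_2$ over $L_2(\Xi)$ made $\der_2\sigma_2(\suball{\alpha_i}{2})$ hold for every $i$, then admissibility of the rule with conclusion $\lfalsum_2$ would force $\der_2\lfalsum_2$ (since $\sigma_2(\lfalsum_2)=\lfalsum_2$); but $\cL_2$ is sound, $\cM_2$ is nonempty, and no matrix satisfies $\lfalsum_2$, so $\not\der_2\lfalsum_2$. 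Hence no such $\sigma_2$ exists, so the premises of $\suball{\alpha_1}{2}\ldots\suball{\alpha_m}{2} \; / \; \suball{\beta}{2}$ are never simultaneously theorems of $\cL_2$ and that rule is vacuously admissible in $\cL_2$ --- the desired contradiction. Starting from $j=2$ instead would produce the symmetric contradiction via $\cL_1$.

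I do not expect any genuine obstacle here; the argument is a two-line deduction from Theorem~\ref{th:refladmvac}. The only points worth a moment's care are that Theorem~\ref{th:refladmvac} is phrased so that the failure of admissibility of \emph{one} projection already triggers its conclusion --- which is precisely what the contradiction hypothesis delivers --- and that admissibility with conclusion $\lfalsum_k$ collapses to admissibility with any conclusion $\suball{\beta}{k}$, which is immediate from $\lfalsum_k\der_k\varphi$ for every $\varphi$ together with $\not\der_k\lfalsum_k$. No induction or new construction is needed.
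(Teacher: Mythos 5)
Your proof is correct and follows exactly the route the paper intends: the corollary is stated as an immediate consequence of Theorem~\ref{th:refladmvac}, and your contradiction argument --- applying that theorem to get admissibility of the rule with conclusion $\lfalsum_k$, then noting that $\not\der_k\lfalsum_k$ forces the premises to never be simultaneously theorems, so the projected rule is vacuously admissible --- is precisely the "immediate" step the authors leave unwritten.
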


%%%%%%%%%%%%%%%%%%%%%%%%%%%%%%%%%%%%
\subsubsection*{Meet-combination of $\IPL$ and $\SQT$, and of $\GL$ and $\SQT$}

The first step to illustrate the results above for the meet-combination is to show that the logics at hand fulfil the requirements imposed by those results. In particular, we now show that $\IPL$, $\SQT$ and $\GL$ are logics with constructors with identities enjoying pairwise formula completion. 

\begin{prop}\label{prop:ttinintermediatehaspairwiseformulacompletion}\em In $\IPL$, $\SQT$ and $\GL$, for every formula $\psi$ there are formulas $\delta^\psi_\lverum$ and $\delta^\psi_\lfalsum$ with
$$t(\delta^\psi_\lverum)\approx t(\psi)\text{ and }\der\lverum\leqv\delta^\psi_\lverum,\qquad\text{and}\qquad t(\delta^\psi_\lfalsum)\approx t(\psi)\text{ and }\der\lfalsum\leqv\delta^\psi_\lfalsum.$$
\end{prop}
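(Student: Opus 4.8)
The plan is to prove both assertions simultaneously by induction on the structure of $\psi$, since the two constructions are mutually recursive: the $\lverum$-completion at a node will use the $\lfalsum$-completions of the immediate subformulas, and vice versa. Beyond the definition of $\approx$, the only fact I need is that it is a congruence with respect to constructors of equal arity, namely that $t(\delta_i)\approx t(\chi_i)$ for $i=1,\dots,n$ together with $c,c'$ being $n$-ary implies $t(c(\delta_1,\dots,\delta_n))\approx t(c'(\chi_1,\dots,\chi_n))$. I would establish this first, by gluing the given embeddings along a root-to-root assignment and checking injectivity (the images of the subtrees land in pairwise disjoint subtrees) and preservation of outdegree (both new roots have outdegree $n$, and the rest is inherited from the component embeddings). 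Note that $\approx$ is blind to which constructor labels a vertex, so it is harmless that the connectives I insert below will in general differ from those occurring in $\psi$.

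For the induction itself I use that in $\IPL$, $\SQT$ and $\GL$ all primitive constructors have arity $0$, $1$ or $2$, so only outdegrees $0$, $1$, $2$ occur in $t(\psi)$. Base case: if $\psi$ is a schema variable or a nullary constructor, put $\delta^{\psi}_{\lverum}:=\lverum$ and $\delta^{\psi}_{\lfalsum}:=\lfalsum$; both trees are single vertices, hence $\approx t(\psi)$, and $\der\lverum\leqv\lverum$, $\der\lfalsum\leqv\lfalsum$ by reflexivity of $\leqv$. Unary step, $\psi=c(\chi)$: put $\delta^{\psi}_{\lverum}:=\lneg\,\delta^{\chi}_{\lfalsum}$ and $\delta^{\psi}_{\lfalsum}:=\lneg\,\delta^{\chi}_{\lverum}$; combining the induction hypotheses $\der\delta^{\chi}_{\lfalsum}\leqv\lfalsum$ and $\der\delta^{\chi}_{\lverum}\leqv\lverum$ with congruence and transitivity of $\leqv$ and the elementary theorems $\der\lneg\lfalsum\leqv\lverum$ and $\der\lneg\lverum\leqv\lfalsum$ (valid in all three logics) yields the required equivalences, and the tree conditions follow from the congruence of $\approx$. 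Binary step, $\psi=c(\chi_1,\chi_2)$: put $\delta^{\psi}_{\lverum}:=\delta^{\chi_1}_{\lverum}\ldisj\delta^{\chi_2}_{\lverum}$ and $\delta^{\psi}_{\lfalsum}:=\delta^{\chi_1}_{\lfalsum}\lconj\delta^{\chi_2}_{\lfalsum}$, and argue in the same way using $\der(\lverum\ldisj\lverum)\leqv\lverum$ and $\der(\lfalsum\lconj\lfalsum)\leqv\lfalsum$.

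The equivalences invoked are all routine in each of $\IPL$, $\SQT$ and $\GL$, so the only genuine work — and the main obstacle — is the combinatorics of $\approx$: formulating and proving the congruence lemma precisely, and checking that at each inductive step the decomposition tree of the formula produced really is $\approx$-equivalent to $t(\psi)$. A small point to settle at the outset is that in these three logics the constructors $\mverum{n}$ are abbreviations rather than primitive connectives, so no node of $t(\psi)$ has outdegree exceeding $2$ and the base, unary and binary cases above exhaust the induction; were one to admit a primitive $n$-ary constructor, the $\lverum$-side would still be immediate via $\mverum{n}(\delta^{\chi_1}_{\lverum},\dots,\delta^{\chi_n}_{\lverum})$, but the $\lfalsum$-side would require an $n$-ary connective provably equivalent to $\lfalsum$, which is not available here.
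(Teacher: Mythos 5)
Your proof is correct and follows essentially the same route as the paper's: a structural induction with $\delta^\psi_\lverum=\lverum$, $\delta^\psi_\lfalsum=\lfalsum$ in the base case, $\lneg$ applied to the dual completion in the unary case, and a binary connective applied to the subformula completions in the binary case. The only differences are cosmetic --- the paper reuses the head connective (e.g.\ taking $\delta^{\psi_1}_\lverum\limp\delta^{\psi_2}_\lfalsum$ for $\psi_1\limp\psi_2$) where you substitute $\ldisj$/$\lconj$ uniformly, which is harmless since $\approx$ only tracks outdegrees, and you make explicit the congruence property of $\approx$ that the paper treats as immediate.
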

\begin{proof} The proof follows by induction on the complexity of $\psi$:\\[1mm]
Base: $\psi$ is in $\Sigma_0\cup\Xi$. Then take $\delta^\psi_\lverum$ to be $\lverum$ and $\delta^\psi_\lfalsum$ to be $\lfalsum$.\\[1mm]
Step. Consider the following cases:\\[1mm]
(i)~$\psi$ is either $\lneg\psi_1$. Let $\delta^{\psi_1}_\lfalsum$ be a formula such that $t(\delta^{\psi_1}_\lfalsum)\approx t(\psi_1)$ and $\der\lfalsum\leqv\delta^{\psi_1}_\lfalsum$, and $\delta^{\psi_1}_\lverum$ be a formula such that $t(\delta^{\psi_1}_\lverum)\approx t(\psi_1)$ and $\der\lverum\leqv\delta^{\psi_1}_\lverum$, which exist by induction hypothesis. Take $\delta^\psi_\lverum$ to be $\lneg\delta^{\psi_1}_\lfalsum$, and $\delta^\psi_\lfalsum$ to be $\lneg\delta^{\psi_1}_\lverum$. Then it is immediate to see that the thesis follows. Observe that
 when $\psi$ is  $\lnec\psi_1$ (only applicable to $\SQT$ and GL) a similar proof can be presented. \\[1mm]
(ii)~$\psi$ is $\psi_1\limp\psi_2$. Let $\delta^{\psi_i}_\lfalsum$ be a formula such that $t(\delta^{\psi_i}_\lfalsum)\approx t(\psi_i)$ and $\der\lfalsum\leqv\delta^{\psi_i}_\lfalsum$, and $\delta^{\psi_i}_\lverum$ be a formula such that $t(\delta^{\psi_i}_\lverum)\approx t(\psi_i)$ and $\der\lverum\leqv\delta^{\psi_i}_\lverum$, for $i=1,2$, which exist by induction hypothesis. Then, it is immediate to see that the thesis follows by taking $\delta^\psi_\lverum$ equal to $\delta^{\psi_1}_\lverum\limp\delta^{\psi_2}_\lverum$ and $\delta^\psi_\lfalsum$ equal to $\delta^{\psi_1}_\lverum\limp\delta^{\psi_2}_\lfalsum$.\\[1mm]
(iii)~$\psi$ is either $\psi_1\lconj\psi_2$, $\psi_1\ldisj\psi_2$ or $\psi_1\leqv\psi_2$. We omit the proof of theses cases since it is very similar to the proof of case (ii).
\end{proof}

Using the previous proposition it is immediate to establish the following result.

\begin{prop}\label{prop:cplconjid1compl2}\em In $\IPL$, $\SQT$ and $\GL$,
\begin{itemize}
	\item $\lconj$ and $\ldisj$ are binary constructors with identities for position $1$ and pairwise formula completion for position $2$. Moreover, they also have identities for position $2$ and pairwise formula completion for position $1$; 
	\item $\limp$ is a binary constructor with identities for position $2$ and pairwise formula completion for position $1$. 
\end{itemize}
\end{prop}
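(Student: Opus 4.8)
The plan is to read both claims off Proposition~\ref{prop:ttinintermediatehaspairwiseformulacompletion} together with the elementary identity laws that are theorems of each of $\IPL$, $\SQT$ and $\GL$. First I would record the identity laws: for every formula $\varphi$,
$$\der\varphi\lconj\lverum\leqv\varphi,\quad\der\lverum\lconj\varphi\leqv\varphi,\quad\der\varphi\ldisj\lfalsum\leqv\varphi,\quad\der\lfalsum\ldisj\varphi\leqv\varphi,\quad\der\lverum\limp\varphi\leqv\varphi,$$
which hold because in each of these logics $\lconj$, $\ldisj$, $\limp$ are interpreted as meet, join and (relative) pseudocomplement in the underlying Heyting or Boolean(-modal) algebras, with $\lverum$ and $\lfalsum$ the top and bottom; equivalently they follow directly from the axioms of the respective calculi. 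Hence, in the sense of the definition preceding the statement, $\lconj$ has identities for position $1$ with $o_2=\lverum$ and for position $2$ with $o_1=\lverum$; $\ldisj$ has identities for position $1$ with $o_2=\lfalsum$ and for position $2$ with $o_1=\lfalsum$; and $\limp$ has identities for position $2$ with $o_1=\lverum$.

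Next I would dispatch the pairwise formula completion clauses by invoking Proposition~\ref{prop:ttinintermediatehaspairwiseformulacompletion}. Viewing $\lconj$ as a constructor with identities for position $1$, pairwise formula completion for position $2$ requires, for each $\psi$, a formula $\delta$ with $\der\lverum\leqv\delta$ and $t(\delta)\approx t(\psi)$; take $\delta=\delta^\psi_\lverum$. The symmetric sub-case of $\lconj$ (identities for position $2$, completion for position $1$) is handled identically, again with $\delta^\psi_\lverum$. For $\ldisj$ the relevant constant in both sub-cases is $\lfalsum$, so one takes $\delta=\delta^\psi_\lfalsum$, which satisfies $\der\lfalsum\leqv\delta^\psi_\lfalsum$ and $t(\delta^\psi_\lfalsum)\approx t(\psi)$. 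For $\limp$, which has identities for position $2$, the required completion is for position $1$, i.e.\ $\der\lverum\leqv\delta$ and $t(\delta)\approx t(\psi)$; take $\delta=\delta^\psi_\lverum$ once more.

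There is essentially no genuine obstacle here: all the substance is already contained in Proposition~\ref{prop:ttinintermediatehaspairwiseformulacompletion}, and the identity laws are routine. The only point needing a moment's care is keeping straight the bookkeeping between the position index $k$ in ``identities for position $k$'' and the index $j\neq k$ in ``pairwise formula completion for position $j$'', and checking in each of the seven items that the constant $o_j$ whose completion is demanded is exactly the one ($\lverum$ or $\lfalsum$) for which Proposition~\ref{prop:ttinintermediatehaspairwiseformulacompletion} supplies a completion. (It is also worth noting in passing why only the listed positions occur: $\limp$ has no constant $o_2$ with $\der\varphi\limp o_2\leqv\varphi$, so it has no identities for position $1$.)
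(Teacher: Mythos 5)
Your proposal is correct and follows exactly the route the paper intends: the paper gives no explicit proof, stating only that the result is immediate from Proposition~\ref{prop:ttinintermediatehaspairwiseformulacompletion}, and your argument supplies precisely the missing routine details (the identity laws with $o=\lverum$ or $o=\lfalsum$, and the choice of $\delta^\psi_\lverum$ or $\delta^\psi_\lfalsum$ as the completion in each case). The bookkeeping of positions is handled correctly throughout, and your closing remark on why $\limp$ lacks identities for position $1$ is a correct, if optional, observation.
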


We now provide illustrations of the results above concerning the conservative preservation of admissible rules. The example is for the meet-combination of $\SQT$ and $\GL$. Since no substitution over $\mc{\SQT\, \GL}$ makes the premise of the rule
%$$\frac{\mc{{\limp}\, {\lconj}}(\mc{{\lnec}\, {\lneg}} \xi, \xi)}{\xi}$$
%$$\left(\stackrel{{\lnec}}{{\lneg}} \xi\right) \stackrel{{\limp}}{{\lconj}}  \xi \; /\; \xi$$
%$$(\mc{{\lnec}_{\KQ}\, {\lneg}_{\GL}} \xi) \mc{{\limp}_{\KQ}\, {\lconj}_{\GL}} \xi \, / \, \xi$$
$$
\left(\bmcu{\lnec_\SQT}{\lneg_\GL}
{\xi} \right)
\bmcb{\limp_\SQT}{\lconj_\GL}
\xi  \; \mathbin{\bigg / }\; \xi
$$
a theorem, it is immediate to see that this rule is admissible in $\mc{\SQT\, \GL}$. 
Hence, by Corollary~\ref{cor:adminmeetimpliesadminincomp}, the projection of this rule
into $\GL$ is admissible because the projection of the rule into $\SQT$ is not admissible. Furthermore,
by Theorem~\ref{th:refladmvac}, the rule of $\GL$ with the projection of the premises of the rule above and having as conclusion  $\lfalsum_\GL$, is admissible.

%%%%%%%%%%%%%%%%%%%%%%%%%%%%%%%%%%%%%%%%%%%%%%%%%%%%%
%%%%%%%%%%%%%%%%%%%%%%%%%%%%%%%%%%%%%%%%%%%%%%%%%%%%%
\section{Bases}\label{sec:basespres}

We now concentrate on obtaining a basis for the logic resulting from the meet-combination given bases for the components. We show that a basis can be obtained by the union of the tagged versions of the component bases. 
\begin{figure}
$$
\begin{array}{clr}
1 & \qquad \alpha_1                  	                    & \HYP\\
  & \qquad \ \vdots								& \\
m & \qquad \alpha_m                  	                    & \HYP\\
m+1 & \qquad \suball{\alpha_1}{1}                  	        & \cLFT \;1\\
  & \qquad \ \vdots								& \\
2m & \qquad  \suball{\alpha_m}{1}                  	                    & \cLFT \;m\\
  & \qquad \ \vdots								& \overline{\mathcal{B}}_1\; m+1,\ldots,2m\\
n_1&\qquad \suball{\beta}{1}			  					& \\
n_1+1 & \qquad \suball{\alpha_1}{2}                     			& \cLFT \;1\\
  & \qquad \ \vdots								& \\
n_1+m & \qquad \suball{\alpha_m}{2}                     			& \cLFT \;m\\
  & \qquad \ \vdots		  					& \overline{\mathcal{B}}_2\; n_1+1,\ldots,n_1+m\\
n_2&\qquad \suball{\beta}{2}	\\
n_2+1& \qquad \beta \qquad 						       		& \LFT\; n_1,n_2\\
\end{array}
$$
\caption{Derivation for $\alpha_1\ldots\alpha_m\dermc^{\overline{\mathcal{B}}_1\cup\overline{\mathcal{B}}_2}\beta$ when $\suball{\alpha_1}{1}\ldots\suball{\alpha_m}{1}\; / \; \suball{\beta}{1}$ and $\suball{\alpha_1}{2}\ldots\suball{\alpha_m}{2}\; / \; \suball{\beta}{2}$ are admissible.}
\label{fig:derbasecompsadms}
\end{figure}

\begin{prop}\label{prop:admissimpliesderivnabase}\em
Let $\cL_1$ and $\cL_2$ be sound logics with equivalence, similar signatures, and complementary constructors with identities. Assume that $\mathcal{B}_1$ and $\mathcal{B}_2$ are bases for the admissible rules of $\cL_1$ and $\cL_2$, respectively. Then, 
$$\text{$\alpha_1\ldots\alpha_m\;/\;\beta$ is an admissible rule of $\mc{\cL_1\cL_2}$}\qquad\Rightarrow\qquad\alpha_1\ldots\alpha_m\dermc^{\overline{\mathcal{B}}_1 \cup \overline{\mathcal{B}}_2}\beta,$$
for every formulas $\alpha_1,\ldots,\alpha_m$ and $\beta$ of $\mc{\cL_1\cL_2}$.
\end{prop}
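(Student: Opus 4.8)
The plan is to reduce the claim to the two disjuncts provided by Corollary~\ref{cor:adminmeetimpliesadminincomp} and to exhibit, in each case, an explicit derivation of $\beta$ from $\alpha_1\ldots\alpha_m$ using only rules in $\overline{\mathcal{B}}_1\cup\overline{\mathcal{B}}_2$ together with the rules already present in $\Deltamc$ (recall that $\der^{\cal R}$ allows use of $\mathcal{R}\cup\Delta$, so the inherited rules, $\LFT$, $\cLFT$ and $\FX$ are always available). So suppose $\alpha_1\ldots\alpha_m\;/\;\beta$ is admissible in $\mc{\cL_1\cL_2}$. By Corollary~\ref{cor:adminmeetimpliesadminincomp}, either $\suball{\alpha_1}{1}\ldots\suball{\alpha_m}{1}\;/\;\suball{\beta}{1}$ is admissible in $\cL_1$ or $\suball{\alpha_1}{2}\ldots\suball{\alpha_m}{2}\;/\;\suball{\beta}{2}$ is admissible in $\cL_2$. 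I will split along whether both projections are admissible or only one is.

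\emph{Case 1: both projections are admissible.} This is exactly the derivation sketched in Figure~\ref{fig:derbasecompsadms}. First apply $\cLFT$ to each $\alpha_i$ to obtain $\suball{\alpha_i}{1}$ (lines $m+1,\ldots,2m$). Since $\mathcal{B}_1$ is a basis for the admissible rules of $\cL_1$ and $\suball{\alpha_1}{1}\ldots\suball{\alpha_m}{1}\;/\;\suball{\beta}{1}$ is admissible in $\cL_1$, we have $\suball{\alpha_1}{1}\ldots\suball{\alpha_m}{1}\der^{\mathcal{B}_1}\suball{\beta}{1}$ in $\cL_1$; lifting this derivation into $\mc{\cL_1\cL_2}$ and replacing each use of a rule of $\mathcal{B}_1$ by the corresponding tagged rule in $\overline{\mathcal{B}}_1$ (which is legitimate because the conclusions occurring along the derivation are $\cL_1$-formulas, hence have an $\cL_1$ head constructor, so the tagging does not obstruct the instantiation — one should check the liberal-rule case here, where the tagged rule concludes $c(\ldots)$ and one instantiates $c$ by the head constructor of the actual conclusion), yields a derivation of $\suball{\beta}{1}$. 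Symmetrically obtain $\suball{\beta}{2}$ using $\cLFT$ and $\overline{\mathcal{B}}_2$. Finally apply $\LFT$ to $\suball{\beta}{1}$ and $\suball{\beta}{2}$ to get $\beta$.

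\emph{Case 2: exactly one projection, say the one to $j$, is not admissible.} By Theorem~\ref{th:refladmvac}, for $k$ the other index, $\suball{\alpha_1}{k}\ldots\suball{\alpha_m}{k}\;/\;\lfalsum_k$ is admissible in $\cL_k$. Then $\mathcal{B}_k$ derives $\lfalsum_k$ from the premises: $\suball{\alpha_1}{k}\ldots\suball{\alpha_m}{k}\der^{\mathcal{B}_k}\lfalsum_k$. As in Case~1, apply $\cLFT$ to each $\alpha_i$ to get $\suball{\alpha_i}{k}$, then run the (tagged, lifted) $\mathcal{B}_k$-derivation to obtain $\lfalsum_k$, then apply $\FX$ (if necessary) to obtain $\lfalsum$ in the form matching the head side of $\beta$ — more precisely, from $\lfalsum_k$ one derives $\suball{\beta}{k}$ (since $\lfalsum_k\der\varphi$ for every $\varphi$ in the relevant component, a property inherited through the embedding), and one also derives $\suball{\beta}{k'}$ by the same token via $\FX$ to pass to $\lfalsum_{k'}$ and then to $\suball{\beta}{k'}$; applying $\LFT$ gives $\beta$. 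One must take a little care that $\lfalsum\der\varphi$ is available as derivations (not just $\der\lfalsum\limp\varphi$) in the component logics — this holds by the standing assumption that $\lfalsum\der\varphi$ for every $\varphi$, which transfers to $\Deltamc$ via the inherited/tagged rules.

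\emph{Main obstacle.} The routine-looking but genuinely delicate point is the bookkeeping that lets one convert a $\mathcal{B}_k$-derivation over $\cL_k$ into a $\overline{\mathcal{B}}_k$-derivation over $\mc{\cL_1\cL_2}$: one must argue that every formula appearing in the lifted derivation stays in $L_k(\Xi)$ (so that liberal rules of $\mathcal{B}_k$, once tagged, can still be instantiated — the tag constructor $c$ is instantiated by the head constructor of the intended conclusion, which exists precisely because that conclusion is a non-variable $\cL_k$-formula), and simultaneously that the inherited (tagged) rules of $\Delta_k$ suffice to mirror every step of the original derivation. Everything else is the straightforward concatenation of the three blocks ($\cLFT$ down, basis derivation, $\LFT$ up) displayed in Figure~\ref{fig:derbasecompsadms}.
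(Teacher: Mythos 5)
Your proof is correct and follows essentially the same route as the paper's: the same two-case split (both projections admissible versus one not), the same three-block derivations ($\cLFT$ down, tagged basis derivation, $\LFT$ up, with $\FX$ bridging the falsum between components in the second case), and the same appeal to Theorem~\ref{th:refladmvac} to obtain the vacuous falsum rule in the surviving component. The liberal-rule tagging subtlety you flag when lifting a $\mathcal{B}_k$-derivation into $\mc{\cL_1\cL_2}$ is a genuine point, but the paper glosses over it in exactly the same way, so no further comment is needed.
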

\begin{proof} Let $\alpha_1\ldots\alpha_m\;/\;\beta$ be an admissible rule of $\mc{\cL_1\cL_2}$. Then, one of the following two cases hold:\\[1mm]
(1)~$\suball{\alpha_1}{1}\ldots\suball{\alpha_m}{1}\; / \; \suball{\beta}{1}$ and $\suball{\alpha_1}{2}\ldots\suball{\alpha_m}{2}\; / \; \suball{\beta}{2}$ are admissible. Then, the derivation in Figure~\ref{fig:derbasecompsadms} is a derivation for $$\alpha_1,\ldots,\alpha_m\dermc^{\overline{\mathcal{B}}_1\cup\overline{\mathcal{B}}_2}\beta.$$\\[1mm]
(2)~either $\suball{\alpha_1}{1}\ldots\suball{\alpha_m}{1}\; / \; \suball{\beta}{1}$ or $\suball{\alpha_1}{2}\ldots\suball{\alpha_m}{2}\; / \; \suball{\beta}{2}$ is not admissible. Suppose without loss of generality that $\suball{\alpha_1}{1}\ldots\suball{\alpha_m}{1}\; / \; \suball{\beta}{1}$ is not admissible. Then, the derivation in Figure~\ref{fig:derbaseumadascompsnaoadms} is a derivation for $$\alpha_1,\ldots,\alpha_m\dermc^{\overline{\mathcal{B}}_1\cup\overline{\mathcal{B}}_2}\beta$$
taking into account that,
$\suball{\alpha_1}{2}\ldots \suball{\alpha_m}{2}\; / \; \lfalsum_2$
is an admissible rule of $\cL_2$ by Proposition~\ref{th:refladmvac}. 
\end{proof}

\begin{figure}
$$
\begin{array}{clr}
1 & \qquad \alpha                  	                    & \HYP\\
  & \qquad \ \vdots								& \\
m & \qquad \alpha_m                  	                    & \HYP\\
m+1 & \qquad \suball{\alpha_1}{2}                  	        & \cLFT \;1\\
  & \qquad \ \vdots								& \\
2m & \qquad  \suball{\alpha_m}{2}                  	                    & \cLFT \;m\\
  & \qquad \ \vdots								& \overline{\mathcal{B}}_2\; m+1,\ldots,2m\\
n_1&\qquad \lfalsum_2		  					& \\
  & \qquad \ \vdots								&  \cL_2\\
n_2&\qquad \suball{\beta}{2}                      			& \\
n_2+1 & \qquad \lfalsum_1                     			& \FX\;n_2\\
  & \qquad \ \vdots								& \cL_1\\
n_3&\qquad \suball{\beta}{1}			  					& \\
n_3+1&\qquad \beta			  					& \LFT\;n_3,n_2\\
\end{array}
$$
\caption{Derivation for $\alpha_1\ldots\alpha_m\dermc^{\overline{\mathcal{B}}_1\cup\overline{\mathcal{B}}_2}\beta$ when  $\suball{\alpha_1}{1}\ldots\suball{\alpha_m}{1}\; / \; \suball{\beta}{1}$ is not admissible.}
\label{fig:derbaseumadascompsnaoadms}
\end{figure}

The following result shows that nothing more is derivable in the meet-combination using the extra power of the proposed basis than admissible rules.

\begin{prop}\label{prop:derivnabaseimpliesadmiss}\em
Let $\cL_1$ and $\cL_2$ be sound  logics with equivalence, similar signatures, and complementary constructors with identities. Assume that $\mathcal{B}_1$ and $\mathcal{B}_2$ are bases for the admissible rules of $\cL_1$ and $\cL_2$, respectively. Then, 
$$\text{$\alpha_1\ldots\alpha_m\;/\;\beta$ is an admissible rule of $\mc{\cL_1\cL_2}$}\qquad\Leftarrow\qquad\alpha_1\ldots\alpha_m\dermc^{\overline{\mathcal{B}}_1\cup\overline{\mathcal{B}}_2}\beta,$$
for every formulas $\alpha_1,\ldots,\alpha_m$ and $\beta$ of $\mc{\cL_1\cL_2}$.
\end{prop}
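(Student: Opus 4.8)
The plan is to go back to the definition of admissibility. Fix formulas $\alpha_1,\ldots,\alpha_m,\beta$ over $\mc{\cL_1\cL_2}$ with $\alpha_1\ldots\alpha_m\dermc^{\overline{\mathcal{B}}_1\cup\overline{\mathcal{B}}_2}\beta$, and let $\sigma$ be an arbitrary substitution over $\mc{\cL_1\cL_2}$ such that $\dermc\sigma(\alpha_i)$ for each $i=1,\ldots,m$; the goal is to show $\dermc\sigma(\beta)$. I would do this by induction along a sequence $\varphi_1\ldots\varphi_n$ witnessing $\alpha_1\ldots\alpha_m\dermc^{\overline{\mathcal{B}}_1\cup\overline{\mathcal{B}}_2}\beta$ (so $\varphi_n=\beta$), establishing $\dermc\sigma(\varphi_j)$ for every $j$.

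The key preliminary step is to note that every rule available for use in such a derivation is an admissible rule of $\mc{\cL_1\cL_2}$. Indeed, every rule in $\Deltamc$ is trivially derivable in $\mc{\cL_1\cL_2}$, hence admissible. And since $\mathcal{B}_k$ is a basis for the admissible rules of $\cL_k$, each rule of $\mathcal{B}_k$ is itself an admissible rule of $\cL_k$, so by Corollary~\ref{cor:preseradmiss} every rule in $\overline{r}$ with $r\in\mathcal{B}_k$ is an admissible rule of $\mc{\cL_1\cL_2}$; thus every rule in $\overline{\mathcal{B}}_1\cup\overline{\mathcal{B}}_2$ is admissible in $\mc{\cL_1\cL_2}$. (Only soundness of $\cL_1$ and $\cL_2$ is actually used here; the remaining hypotheses are inherited from the companion Proposition~\ref{prop:admissimpliesderivnabase}.)

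Now for the induction: if $\varphi_j$ is one of the hypotheses $\alpha_i$, then $\dermc\sigma(\varphi_j)$ by the choice of $\sigma$. Otherwise $(\{\varphi_{i_1},\ldots,\varphi_{i_k}\},\varphi_j)$ is a substitution instance, via some substitution $\tau$ over $\mc{\cL_1\cL_2}$, of a rule $\gamma_1\ldots\gamma_k\;/\;\delta$ in $(\overline{\mathcal{B}}_1\cup\overline{\mathcal{B}}_2)\cup\Deltamc$, with $i_1,\ldots,i_k<j$. By the induction hypothesis $\dermc\sigma(\varphi_{i_l})$ for each $l$, and since $\sigma(\varphi_{i_l})=\sigma(\tau(\gamma_l))=(\sigma\circ\tau)(\gamma_l)$, this says $\dermc(\sigma\circ\tau)(\gamma_l)$ for each $l$. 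As $\gamma_1\ldots\gamma_k\;/\;\delta$ is admissible in $\mc{\cL_1\cL_2}$ by the preceding step, applying the definition of admissibility to the substitution $\sigma\circ\tau$ yields $\dermc(\sigma\circ\tau)(\delta)=\dermc\sigma(\tau(\delta))=\dermc\sigma(\varphi_j)$. Taking $j=n$ gives $\dermc\sigma(\beta)$, which is what we want.

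The real content is concentrated in the preliminary step (rules of the base and of $\Deltamc$ are admissible in the combined logic), which is immediate from Corollary~\ref{cor:preseradmiss} together with ``derivable implies admissible''; the induction itself is routine. The only points needing slight care are the bookkeeping with composite substitutions --- that applying $\sigma$ to a substitution instance $\tau(\gamma_l)$ of a premise gives the $(\sigma\circ\tau)$-instance $(\sigma\circ\tau)(\gamma_l)$ --- and the observation that $\Deltamc$, being a Hilbert calculus, consists of derivable hence admissible rules, so that its occurrences in a $\dermc^{\overline{\mathcal{B}}_1\cup\overline{\mathcal{B}}_2}$-derivation fall under exactly the same case as the rules of $\overline{\mathcal{B}}_1\cup\overline{\mathcal{B}}_2$. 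So I do not anticipate a genuine obstacle.
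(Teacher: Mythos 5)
Your proof is correct and follows essentially the same route as the paper's: an induction on the length of the $\dermc^{\overline{\mathcal{B}}_1\cup\overline{\mathcal{B}}_2}$-derivation whose real content is that every rule usable in such a derivation remains admissible in $\mc{\cL_1\cL_2}$. The only difference is organizational: the paper re-derives the admissibility of the tagged basis rules inline at each step (splitting on $\Deltamc$, $\overline{\mathcal{B}}_1$, $\overline{\mathcal{B}}_2$ and arguing via Proposition~\ref{prop:reflectdersub} and lifting), whereas you obtain it once up front from Corollary~\ref{cor:preseradmiss} plus ``derivable implies admissible'' and then run a purely formal induction --- which is cleaner and also makes visible, as you note, that only soundness of the components is needed for this direction.
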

\begin{proof}
The proof follows by induction on the length of a derivation $\psi_1\ldots\psi_m$ for $\alpha_1,\ldots,\alpha_m\dermc^{\overline{\mathcal{B}}_1\cup\overline{\mathcal{B}}_2}\beta$.\\[1mm]
Base. Then, one of the following cases hold:\\[1mm]
(a)~$\beta$ is $\alpha_i$ for some $i$ in $\{1,\ldots,m\}$. Then $\alpha_1,\ldots,\alpha_m\;/\;\beta$ is an admissible rule of $\mc{\cL_1\cL_2}$;\\[2mm]
(b)~$\beta$ is $\rho(\delta)$ for some axiom rule $/\;\delta$ of $\mc{\cL_1,\cL_2}$ and substitution $\rho$ over $\mc{\cL_1\cL_2}$. Then $\dermc\sigma(\beta)$ for every substitution $\sigma$ over $\mc{\cL_1\cL_2}$ since $\sigma(\beta)$ is an instance of $\delta$ by substitution $\sigma\circ\rho$. So $\alpha_1,\ldots,\alpha_m\;/\;\beta$ is an admissible rule of $\mc{\cL_1\cL_2}$;\\[2mm]
(c)~$\beta$ is $\rho(c_1(\delta_1,\ldots,\delta_n))$ where $\;/\;c_1(\delta_1,\ldots,\delta_n)$ is in $\overline{\mathcal{B}}_1$  and $\rho$ is a substitution over $\mc{\cL_1\cL_2}$. Let $\sigma$ be a substitution over $\mc{\cL_1\cL_2}$. Observe that $\der_1\suball{\sigma(\beta)}{1}$ since $\suball{\sigma(\beta)}{1}$ is $\suball{(\sigma\circ\rho)}{1}(c_1(\delta_1,\ldots,\delta_n))$, and either $/\;c_1(\delta_1,\ldots,\delta_n)$ or $/\;\xi$ is a rule in $\mathcal{B}_1$, and, so, is admissible. Observe also that $\der_2\suball{\sigma(\beta)}{2}$ 
since the head constructor of $\suball{\sigma(\beta)}{2}$ is of the form $\mverum{n}_2$ for some $n$. Hence $\dermc\sigma(\beta)$ by lifting, and, so, $\alpha_1,\ldots,\alpha_m\;/\;\beta$ is an admissible rule of $\mc{\cL_1\cL_2}$;\\[2mm]
(d)~$\beta$ is $\rho(c_2(\delta_1,\ldots,\delta_n))$ where $\;/\;c_2(\delta_1,\ldots,\delta_n)$ is in $\overline{\mathcal{B}}_2$  and $\rho$ is a substitution over $\mc{\cL_1\cL_2}$. We omit the proof of this case since it is similar to the proof of case (c).\\[2mm]
Step. Let $r$ be a rule $\delta_1\ldots\delta_k\;/\;\delta$ either in $\mc{\cL_1\cL_2}$ or in $\overline{\mathcal{B}}_1\cup\overline{\mathcal{B}}_2$, $\rho$ a substitution, and  $i_1,\ldots,i_k$ natural numbers with $1\leq i_1,\ldots,i_k<m$, such that $\psi_{i_1}\ldots\psi_{i_k}\;/\;\psi_m$ is an instance of $r$ by $\rho$. So, $\beta$, i.e.~$\psi_m$, is $\rho(\delta)$. Observe that $\alpha_1\ldots\alpha_m\;/\;\psi_{i_j}$ is an admissible rule of $\mc{\cL_1\cL_2}$ by induction hypothesis for $j=1,\dots,k$. Let $\sigma$ be a substitution over $\mc{\cL_1\cL_2}$ such that
$$\dermc\sigma(\alpha_1)\ldots\dermc\sigma(\alpha_m).$$
Then $\dermc\sigma(\psi_{i_j})$ for $j=1,\ldots,k$ by the induction hypothesis. Observe that $\sigma(\psi_{i_1})\ldots\sigma(\psi_{i_k})\;/\;\sigma(\psi_m)$ is an instance of $r$ by substitution $\sigma\circ\rho$. Consider the following cases:\\[1mm]
(a)~$r$ is a rule of $\mc{\cL_1\cL_2}$. Then, $\dermc\sigma(\psi_m)$, i.e., $\dermc\sigma(\beta)$;\\[1mm]
(b)~$r$ is either a non-liberal rule of $\mathcal{B}_1$ or the tagging of a liberal rule of $\mathcal{B}_1$. Hence the head constructor of $\beta$ is from $\cL_1$. Observe that $\der_1\suball{\sigma(\psi_{i_j})}{1}$, i.e.,
$\der_1\suball{(\sigma\circ\rho)}{1}(\delta_j)$
for $j=1,\ldots,k$, by Proposition~\ref{prop:reflectdersub}. So, $\der_1\suball{(\sigma\circ\rho)}{1}(\delta)$, i.e.,
$$\der_1\suball{\sigma(\beta)}{1},$$
since $r$ is also an admissible rule of $\cL_1$. Observe also that $\der_2\suball{\sigma(\beta)}{2}$ 
since the head constructor of $\suball{\sigma(\beta)}{2}$ is of the form $\mverum{n}_2$ for some $n$. Hence
$$\dermc\sigma(\beta)$$
by lifting, and, so, $\alpha_1\ldots\alpha_m\;/\;\beta$ is an admissible rule of $\mc{\cL_1\cL_2}$;\\[2mm]
(c)~$r$ is either a non-liberal rule of $\mathcal{B}_2$ or the tagging of a liberal rule of $\mathcal{B}_2$. We omit the proof of this case since it is similar to the proof of case (b).
\end{proof}

So, taking into account Proposition~\ref{prop:admissimpliesderivnabase} and Proposition~\ref{prop:derivnabaseimpliesadmiss}, we can conclude that the union of the tagged bases of the components is a basis for rule admissibility in the logic resulting from the meet-combination.

\begin{pth}\label{pth:baseadmissmeet}\em
Let $\cL_1$ and $\cL_2$ be sound logics with equivalence, similar signatures, and complementary constructors with identities. Assume that $\mathcal{B}_1$ and $\mathcal{B}_2$ are bases for admissible rules of $\cL_1$ and $\cL_2$, respectively. Then, 
$$\overline{\mathcal{B}}_1\cup\overline{\mathcal{B}}_2$$
is a basis for admissible rules of $\mc{\cL_1\cL_2}$.
\end{pth}

%%%%%%%%%%%%%%%%%%%%%%%%%%%%%%%%%%%%
\subsubsection*{Meet-combination of $\IPL$ and $\SQT$}

According to Theorem~\ref{pth:baseadmissmeet}, a basis for the logic resulting from the meet-combi\-nation of $\IPL$ and $\SQT$ is composed by the rules:
$$\left(\bigwedge_{i=1} ^n(\xi_i \limp \xi'_i) \limp \xi_{n+1} \ldisj \xi_{n+2}\right) \ldisj \xi''\; / \; 
\bigvee_{j=1}^{n+2} \left(\bigwedge_{i=1} ^n(\xi_i \limp \xi'_i) \limp \xi_{j}\right) \ldisj \xi''$$
for $n=1,\dots$, and by 
$$(\lposs \xi)\lconj (\lposs \lneg \xi) \; / \; \lfalsum,$$
which are bases for $\IPL$ and $\SQT$, respectively.

%%%%%%%%%%%%%%%%%%%%%%%%%%%%%%%%%%%%%%%%%%%%%%%%%%%%%
%%%%%%%%%%%%%%%%%%%%%%%%%%%%%%%%%%%%%%%%%%%%%%%%%%%%%
\section{Structural completeness, decidability, complexity}\label{sec:structcomp}

Herein, we start by investigating the preservation of structural completeness. Afterwards we address the problem of finding an algorithm for deciding admissibility of rules in the combined logic given algorithms for the components. Finally, we discuss the time complexity of the algorithm.

\begin{figure}
$$
\begin{array}{clr}
1 & \qquad \alpha_1                  	                    & \HYP\\
  & \qquad \ \vdots								& \\
m & \qquad \alpha_m                  	                    & \HYP\\
m+1 & \qquad \suball{\alpha_1}{1}                  	        & \cLFT \;1\\
  & \qquad \ \vdots								& \\
2m & \qquad  \suball{\alpha_m}{1}                  	                    & \cLFT \;m\\
  & \qquad \ \vdots								& \cL_1\; m+1,\ldots,2m\\
n_1&\qquad \suball{\beta}{1}			  					& \\
n_1+1 & \qquad \suball{\alpha_1}{2}                     			& \cLFT \;1\\
  & \qquad \ \vdots								& \\
n_1+m & \qquad \suball{\alpha_m}{2}                     			& \cLFT \;m\\
  & \qquad \ \vdots		  					& \cL_2\; n_1+1,\ldots,n_1+m\\
n_2&\qquad \suball{\beta}{2}	\\
n_2+1& \qquad \beta \qquad 						       		& \LFT\; n_1,n_2\\
\end{array}
$$
\caption{Derivation for $\alpha_1\ldots\alpha_m\dermc\beta$ when $\suball{\alpha_1}{1}\ldots\suball{\alpha_m}{1}\; / \; \suball{\beta}{1}$ and $\suball{\alpha_1}{2}\ldots\suball{\alpha_m}{2}\; / \; \suball{\beta}{2}$ are admissible, and $\cL_1$ and $\cL_2$ are structural complete.}
\label{fig:derstructcomplcompsadms}
\end{figure}

\begin{prop}\label{prop:presstructcompl}\em Let $\cL_1$ and $\cL_2$ be sound logics with equivalence, similar signatures and complementary constructors with identities. Assume that $\cL_1$ and $\cL_2$ are structural complete logics. Then, $\mc{\cL_1\cL_2}$ is structural complete.
\end{prop}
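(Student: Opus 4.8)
The plan is to take an arbitrary admissible rule $\alpha_1\ldots\alpha_m\;/\;\beta$ of $\mc{\cL_1\cL_2}$ and to exhibit a derivation $\alpha_1\ldots\alpha_m\dermc\beta$, splitting the argument according to whether both projections of the rule are admissible in the components. The case split itself is trivial; what makes it work is that, by Theorem~\ref{th:refladmvac}, if one projection fails to be admissible then the other projection with conclusion $\lfalsum$ is admissible. Concretely: either (1) $\suball{\alpha_1}{1}\ldots\suball{\alpha_m}{1}\;/\;\suball{\beta}{1}$ and $\suball{\alpha_1}{2}\ldots\suball{\alpha_m}{2}\;/\;\suball{\beta}{2}$ are admissible in $\cL_1$ and $\cL_2$ respectively (so that Corollary~\ref{cor:adminmeetimpliesadminincomp} is the relevant fact), or (2) the projection to some $\cL_j$ is not admissible, in which case Theorem~\ref{th:refladmvac} yields that $\suball{\alpha_1}{k}\ldots\suball{\alpha_m}{k}\;/\;\lfalsum_k$ is admissible in $\cL_k$, $k\neq j$.

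In case (1), I would use the structural completeness of $\cL_1$ and $\cL_2$ to upgrade admissibility of the two projections to derivability: $\suball{\alpha_1}{1}\ldots\suball{\alpha_m}{1}\der_1\suball{\beta}{1}$ and $\suball{\alpha_1}{2}\ldots\suball{\alpha_m}{2}\der_2\suball{\beta}{2}$. Since $\Deltamc$ contains the (tagged) inherited rules, these component derivations can be replayed inside $\mc{\cL_1\cL_2}$ via the embeddings $\eta_1,\eta_2$. The full $\mc{\cL_1\cL_2}$-derivation is then precisely the one displayed in Figure~\ref{fig:derstructcomplcompsadms}: from the hypotheses $\alpha_1,\ldots,\alpha_m$ obtain the projections $\suball{\alpha_i}{1}$ and $\suball{\alpha_i}{2}$ by $\cLFT$, run the two component derivations to reach $\suball{\beta}{1}$ and $\suball{\beta}{2}$, and close with a single application of $\LFT$.

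In case (2), assume without loss of generality $j=1$, so that $\suball{\alpha_1}{2}\ldots\suball{\alpha_m}{2}\;/\;\lfalsum_2$ is admissible in $\cL_2$ and hence, by structural completeness of $\cL_2$, derivable: $\suball{\alpha_1}{2}\ldots\suball{\alpha_m}{2}\der_2\lfalsum_2$. The $\mc{\cL_1\cL_2}$-derivation then follows the shape of Figure~\ref{fig:derbaseumadascompsnaoadms}, with the basis steps replaced by this direct $\cL_2$-derivation: from $\alpha_1,\ldots,\alpha_m$ get $\suball{\alpha_i}{2}$ by $\cLFT$, derive $\lfalsum_2$, pass to $\lfalsum_1$ by $\FX$, use the explosion property $\lfalsum_1\der_1\suball{\beta}{1}$ and $\lfalsum_2\der_2\suball{\beta}{2}$ (from the standing assumptions on $\lfalsum$ in Section~\ref{sec:prems}) to obtain both projections of $\beta$, and finish with $\LFT$.

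I do not anticipate a real obstacle: the two figures already encapsulate the required derivations, and Theorem~\ref{th:refladmvac} / Corollary~\ref{cor:adminmeetimpliesadminincomp} supply the case analysis. The only points demanding a touch of care are that the component derivations must be read inside $\mc{\cL_1\cL_2}$ through the conservative embeddings $\eta_1,\eta_2$ (legitimate because $\Deltamc$ carries the tagged inherited rules), and that in case (2) the passage from $\lfalsum_k$ to $\suball{\beta}{k}$ relies on the assumed behaviour of $\lfalsum$ in each component.
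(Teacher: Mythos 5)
Your proposal is correct and follows essentially the same route as the paper's own proof: the same dichotomy on admissibility of the two projections, with Theorem~\ref{th:refladmvac} handling the degenerate case, structural completeness of the components upgrading admissibility to derivability, and the derivations of Figures~\ref{fig:derstructcomplcompsadms} and~\ref{fig:derstructcomplumadascompsnaoadms} assembled from $\cLFT$, the replayed component derivations, $\FX$ and $\LFT$. No gaps beyond those already present in the paper's own presentation.
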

\begin{proof}
Let $\alpha_1\ldots\alpha_m\;/\;\beta$ be an admissible rule of $\mc{\cL_1\cL_2}$. Then, one of the following two cases hold:\\[1mm]
(1)~$\suball{\alpha_1}{1}\ldots\suball{\alpha_m}{1}\; / \; \suball{\beta}{1}$ and $\suball{\alpha_1}{2}\ldots\suball{\alpha_m}{2}\; / \; \suball{\beta}{2}$ are admissible. Then, the derivation in Figure~\ref{fig:derstructcomplcompsadms} is a derivation for $\alpha_1,\ldots,\alpha_m\dermc\beta.$\\[1mm]
(2)~either $\suball{\alpha_1}{1}\ldots\suball{\alpha_m}{1}\; / \; \suball{\beta}{1}$ or $\suball{\alpha_1}{2}\ldots\suball{\alpha_m}{2}\; / \; \suball{\beta}{2}$ is not admissible. Suppose without loss of generality that $\suball{\alpha_1}{1}\ldots\suball{\alpha_m}{1}\; / \; \suball{\beta}{1}$ is not admissible. Then, the derivation in Figure~\ref{fig:derstructcomplumadascompsnaoadms} is a derivation for $\alpha_1,\ldots,\alpha_m\dermc\beta$
taking into account that,
$\suball{\alpha_1}{2}\ldots \suball{\alpha_m}{2}\; / \; \lfalsum_2$
is an admissible rule of $\cL_2$ by Theorem~\ref{th:refladmvac}.
\end{proof}

The previous result allow us to conclude that the logic resulting from the meet-combination of propositional logic and G\"odel-Dummett logic is structurally complete since those logics are structurally complete~\cite{ryb:97}. 

We now investigate the decidability and time complexity of the admissibility problem in the logic resulting from a meet-combination. 
In the sequel, we denote by $\mathbf{P}$ the class of problems decided in polynomial time. 

\begin{figure}
$$
\begin{array}{clr}
1 & \qquad \alpha                  	                    & \HYP\\
  & \qquad \ \vdots								& \\
m & \qquad \alpha_m                  	                    & \HYP\\
m+1 & \qquad \suball{\alpha_1}{2}                  	        & \cLFT \;1\\
  & \qquad \ \vdots								& \\
2m & \qquad  \suball{\alpha_m}{2}                  	                    & \cLFT \;m\\
  & \qquad \ \vdots								& \cL_2\; m+1,\ldots,2m\\
n_1&\qquad 		 \lfalsum_2	  					& \\
  & \qquad \ \vdots								& \cL_2\\
n_2&\qquad \suball{\beta}{2}                      			& \\
n_2+1 & \qquad \lfalsum_1                     			& \FX\;n_1\\
  & \qquad \ \vdots								& \cL_1\\
n_3&\qquad \suball{\beta}{1}			  					& \\
n_3+1&\qquad \beta			  					& \LFT\;n_3,n_2\\
\end{array}
$$
\caption{Derivation for $\alpha_1\ldots\alpha_m\dermc\beta$ when either $\suball{\alpha_1}{1}\ldots\suball{\alpha_m}{1}\; / \; \suball{\beta}{1}$ or $\suball{\alpha_1}{2}\ldots\suball{\alpha_m}{2}\; / \; \suball{\beta}{2}$ is not admissible, and $\cL_1$ and $\cL_2$ are structural complete.}
\label{fig:derstructcomplumadascompsnaoadms}
\end{figure}

\begin{pth}\label{pth:presstructcompl}\em Let $\cL_1$ and $\cL_2$ be sound logics with equivalence, similar signatures and complementary constructors with identities, and $\mathcal{C}_1$ and $\mathcal{C}_2$ time complexity classes. Assume that $\cL_1$ and $\cL_2$ have a decision algorithm for admissibility  in $\mathcal{C}_1$ and $\mathcal{C}_2$ respectively. Then, $\mc{\cL_1\cL_2}$ has a decision algorithm for admissibility  in the upper bound of $\mathcal{C}_1$, $\mathcal{C}_2$ and $\mathbf{P}$.
\end{pth}
\begin{proof} Let $\ADM_{\cL_i}$ be an algorithm in $\mathcal{C}_i$ for admissibility in $\cL_i$ for $i=1,2$.
Consider the algorithm $\ADM_{\mc{\cL_1\cL_2}}$ in Figure~\ref{fig:decisionalgorithmadmissibility}.\\[1mm]
(1)~We start by showing that $\ADM_{\mc{\cL_1\cL_2}}(\{\alpha_1,\ldots,\alpha_m\},\beta)=1$ implies that $\alpha_1\ldots\alpha_m\;/\;\beta$ is an admissible rule of $\mc{\cL_1\cL_2}$. Consider the cases for which the algorithm returns $1$:\\[1mm]
(i)~$\ADM_{\cL_1}(\{\suball{\alpha_1}{1},\ldots,\suball{\alpha_m}{1}\},\suball{\beta}{1})=1$ and $\ADM_{\cL_2}(\{\suball{\alpha_1}{2},\ldots,\suball{\alpha_m}{2}\},\suball{\beta}{2})=1$.
Then,
$\suball{\alpha_1}{1},\ldots,\suball{\alpha_m}{1}\;/\;\suball{\beta}{1}$ is an admissible rule of $\cL_1$  and $\suball{\alpha_1}{2},\ldots,\suball{\alpha_m}{2}\;/\;\suball{\beta}{2}$ is an admissible rule of $\cL_2$. Hence, by Theorem~\ref{th:preseradmissibambososlados}, $\alpha_1\ldots\alpha_m\;/\;\beta$ is an admissible rule of $\mc{\cL_1\cL_2}$ as we wanted to show;\\[1mm]
(ii)~$\ADM_{\cL_1}(\{\suball{\alpha_1}{1},\ldots,\suball{\alpha_m}{1}\},\suball{\beta}{1})=1$ and $\ADM_{\cL_1}(\{\suball{\alpha_1}{1},\ldots,\suball{\alpha_m}{1}\},\lfalsum_1)=1$ and $\ADM_{\cL_2}(\{\suball{\alpha_1}{2},\ldots,\suball{\alpha_m}{2}\},\suball{\beta}{2})=0$. Then $\suball{\alpha_1}{1},\ldots,\suball{\alpha_m}{1}\;/\;\lfalsum_1$ is an admissible rule of $\cL_1$, and, so, by Theorem~\ref{th:preseradmissfalsum}, $\alpha_1\ldots\alpha_m\;/\;\beta$ is an admissible rule of $\mc{\cL_1\cL_2}$ as we wanted to show;\\[1mm]
(iii)~$\ADM_{\cL_1}(\{\suball{\alpha_1}{1},\ldots,\suball{\alpha_m}{1}\},\suball{\beta}{1})=0$ and $\ADM_{\cL_2}(\{\suball{\alpha_1}{2},\ldots,\suball{\alpha_m}{2}\},\suball{\beta}{2})=1$ and $\ADM_{\cL_2}(\{\suball{\alpha_1}{2},\ldots,\suball{\alpha_m}{2}\},\lfalsum_2)=1$. Then $\suball{\alpha_1}{2},\ldots,\suball{\alpha_m}{2}\;/\;\lfalsum_2$ is an admissible rule of $\cL_2$. So, by Theorem~\ref{th:preseradmissfalsum}, $\alpha_1\ldots\alpha_m\;/\;\beta$ is an admissible rule of $\mc{\cL_1\cL_2}$ as we wanted to show.\\[2mm]
(2)~We now show that $\ADM_{\mc{\cL_1\cL_2}}(\{\alpha_1,\ldots,\alpha_m\},\beta)=0$ implies $\alpha_1\ldots\alpha_m\;/\;\beta$ is not an admissible rule of $\mc{\cL_1\cL_2}$. Consider the cases for which the algorithm returns $0$:\\[1mm]
(i)~$\ADM_{\cL_1}(\{\suball{\alpha_1}{1},\ldots,\suball{\alpha_m}{1}\},\suball{\beta}{1})=0$ and $\ADM_{\cL_2}(\{\suball{\alpha_1}{2},\ldots,\suball{\alpha_m}{2}\},\suball{\beta}{2})=0$.
Then, both $\suball{\alpha_1}{1},\ldots,\suball{\alpha_m}{1}\;/\;\suball{\beta}{1}$ and $\suball{\alpha_1}{2},\ldots,\suball{\alpha_m}{2}\;/\;\suball{\beta}{2}$ are not admissible rules of $\cL_1$ and $\cL_2$ respectively. Hence, by Corollary~\ref{cor:adminmeetimpliesadminincomp}, $\alpha_1\ldots\alpha_m\;/\;\beta$ is not an admissible rule of $\mc{\cL_1\cL_2}$, as we wanted to show;\\[1mm]
(ii)~$\ADM_{\cL_1}(\{\suball{\alpha_1}{1},\ldots,\suball{\alpha_m}{1}\},\suball{\beta}{1})=1$ and $\ADM_{\cL_1}(\{\suball{\alpha_1}{1},\ldots,\suball{\alpha_m}{1}\},\lfalsum_1)=0$ and $\ADM_{\cL_2}(\{\suball{\alpha_1}{2},\ldots,\suball{\alpha_m}{2}\},\suball{\beta}{2})=0$. Then $\suball{\alpha_1}{1},\ldots,\suball{\alpha_m}{1}\;/\;\suball{\beta}{1}$ is an admissible rule of $\cL_1$ and $\suball{\alpha_1}{1},\ldots,\suball{\alpha_m}{1}\;/\;\lfalsum_1$ and $\suball{\alpha_1}{2},\ldots,\suball{\alpha_m}{2}\;/\;\suball{\beta}{2}$ are not admissible rules of $\cL_1$ and $\cL_2$ respectively. Suppose by contradiction that $\alpha_1\ldots\alpha_m\;/\;\beta$ is an admissible rule of $\mc{\cL_1\cL_2}$. Then, by Theorem~\ref{th:refladmvac}, $\suball{\alpha_1}{1},\ldots,\suball{\alpha_m}{1}\;/\;\lfalsum_1$ is an admissible rule of $\cL_1$. Contradiction. So, $\alpha_1\ldots\alpha_m\;/\;\beta$ is not an admissible rule of $\mc{\cL_1\cL_2}$;\\[1mm]
(iii)~$\ADM_{\cL_1}(\{\suball{\alpha_1}{1},\ldots,\suball{\alpha_m}{1}\},\suball{\beta}{1})=0$ and $\ADM_{\cL_2}(\{\suball{\alpha_1}{2},\ldots,\suball{\alpha_m}{2}\},\suball{\beta}{2})=1$ and $\ADM_{\cL_2}(\{\suball{\alpha_1}{2},\ldots,\suball{\alpha_m}{2}\},\lfalsum_2)=0$. Then $\suball{\alpha_1}{2},\ldots,\suball{\alpha_m}{2}\;/\;\suball{\beta}{2}$ is an admissible rule of $\cL_2$ and $\suball{\alpha_1}{1},\ldots,\suball{\alpha_m}{1}\;/\;\suball{\beta}{1}$ and $\suball{\alpha_1}{2},\ldots,\suball{\alpha_m}{2}\;/\;\lfalsum_2$ are not admissible rules of $\cL_1$ and $\cL_2$ respectively. Suppose by contradiction that $\alpha_1\ldots\alpha_m\;/\;\beta$ is an admissible rule of $\mc{\cL_1\cL_2}$. Then, by Theorem~\ref{th:refladmvac}, $\suball{\alpha_1}{2},\ldots,\suball{\alpha_m}{2}\;/\;\lfalsum_2$ is an admissible rule of $\cL_2$. Contradiction. So, $\alpha_1\ldots\alpha_m\;/\;\beta$ is not an admissible rule of $\mc{\cL_1\cL_2}$.\\[2mm]
(3)~The decision algorithm $\ADM_{\mc{\cL_1\cL_2}}$ is in the upper bound of $\mathcal{C}_1$,  $\mathcal{C}_2$ and $\mathbf{P}$. Indeed: the operations  executed in step $1$ are in the upper bound of $\mathcal{C}_1$ and $\mathbf{P}$; the operations executed in  step $2$ are in the upper bound of $\mathcal{C}_2$ and $\mathbf{P}$;  the operations performed in steps $3$ and $4$ are in $\mathbf{P}$;  the operations in  step $5$ are in the upper bound of $\mathcal{C}_1$ and $\mathbf{P}$;  and the operations  in step $6$ are in $\mathcal{C}_2$. 
\end{proof}

According to Theorem~\ref{pth:presstructcompl}, the problem of checking if a rule is admissible in the logic resulting from 
the meet-combination of $\IPL$ and $\SQT$ is decidable. Moreover, it is co-NEXP-complete since the admissibility problem of
$\IPL$ is in co-NEXP-complete and the admissibility problem of $\SQT$ is in co-NP-complete.

\begin{figure}
	       \hrule  \vspace*{4mm}
	       Input: $\{\alpha_1,\ldots,\alpha_m\}$ contained in $\Lmc(\Xi)$ and $\beta$ in $\Lmc(\Xi)$\\[2mm]
	       Requires: access to the decision algorithms $\ADM_{\cL_1}$ and $\ADM_{\cL_2}$
\begin{enumerate}
  \item $a_1:=\ADM_{\cL_1}(\{\suball{\alpha_1}{1},\ldots,\suball{\alpha_m}{1}\},\suball{\beta}{1})$;
  \item $a_2:=\ADM_{\cL_2}(\{\suball{\alpha_1}{2},\ldots,\suball{\alpha_m}{2}\},\suball{\beta}{2})$;
  \item If $a_1=1$ and $a_2=1$ then Return $1$;
  \item If $a_1=0$ and $a_2=0$ then Return $0$;
  \item If $a_1=1$ then Return $\ADM_{\cL_1}(\{\suball{\alpha_1}{1},\ldots,\suball{\alpha_m}{1}\},\lfalsum_1)$;
  \item Return $\ADM_{\cL_2}(\{\suball{\alpha_1}{2},\ldots,\suball{\alpha_m}{2}\},\lfalsum_2)$.
  \end{enumerate}
       \hrule  \vspace*{4mm}
\caption{Decision algorithm for admissibility in the meet-combination.}
\label{fig:decisionalgorithmadmissibility}
\end{figure}

%%%%%%%%%%%%%%%%%%%%%%%%%%%%%%%%%%%%%%%%%%%%%%%%%%%%%% 
%%%%%%%%%%%%%%%%%%%%%%%%%%%%%%%%%%%%%%%%%%%%%%%%%%%%%%
\section{Concluding remarks}\label{sec:outlook}

The basic results concerning admissibility when meet-combining logics were established in this paper, including conservative preservation of admissible rules, construction of a basis for the resulting logic from bases given for the component logics, preservation of structural completeness, and preservation of decidability of admissibility with no penalty in the complexity. Meet-combination turned out to be quite well behaved in this respect. 

Taking into account the work in~\cite{ghi:00,ghi:99}, we intend to investigate finitary results for unification in the context of the meet-combination.
Another challenging issue is to extend to meet-combination the algebraic characterisation of admissibility presented in~\cite{met:12}. 

Since the notion of admissible rule does not depend on the particular proof system~\cite{iem:15}, it seems worthwhile to analyse it in the context of consequence systems. This opens the door to studying preservation of admissibility by combination mechanisms at the level of consequence systems.

Beyond meet-combination, we intend to investigate admissibility when using combination mechanisms with a stronger interaction between the components in the resulting logic, namely fusion of modal logics, fibring and modulated fibring, capitalising on the vast work on admissibility in modal logic. Given the stronger interaction we do not expect the results to be as clean as those we obtained for meet-combination.

%%%%%%%%%%%%%%%%%%%%%%%%%%%%%%%%%%%%%%%%%%%%%%%%%%%%%
%%%%%%%%%%%%%%%%%%%%%%%%%%%%%%%%%%%%%%%%%%%%%%%%%%%%%
\section*{Acknowledgments}

This work was partially supported, 
under the PQDR (Probabilistic, Quantum and Differential Reasoning) initiative of SQIG at IT, 
by FCT and EU FEDER, as well as by the project UID/EEA/50008/2013 for the R\&D Unit 50008 financed by the
applicable financial framework (FCT/MEC through
national funds and when applicable co-funded by
FEDER$-$PT2020 partnership agreement), 
and by the European Union's Seventh Framework Programme for Research (FP7) 
namely through project LANDAUER (GA 318287).

%%%%%%%%%%%%%%%%%%%%%%%%%%%%%%%%%%%%%%%%%%%%%%%%%%%%%
%%%%%%%%%%%%%%%%%%%%%%%%%%%%%%%%%%%%%%%%%%%%%%%%%%%%%

%\nocite{*}

%\bibliography{adm01}

\begin{thebibliography}{10}

\bibitem{bab:92}
S.~V. Bab{\"e}nyshev.
\newblock Decidability of the problem of the admissibility of inference rules
  in the modal logics {${\rm S}4.2$} and {${\rm S}4.2{\rm Grz}$} and the
  super-intuitionistic logic {${\rm KC}$}.
\newblock {\em Algebra i Logika}, 31(4):341--359, 449, 1992.

\bibitem{wcarnielli:jfr:css:04}
W.~A. Carnielli, J.~Rasga, and C.~Sernadas.
\newblock Preservation of interpolation features by fibring.
\newblock {\em Journal of Logic and Computation}, 18(1):123--151, 2008.

\bibitem{met:10}
P.~Cintula and G.~Metcalfe.
\newblock Admissible rules in the implication-negation fragment of
  intuitionistic logic.
\newblock {\em Annals of Pure and Applied Logic}, 162(2):162--171, 2010.

\bibitem{fri:75}
H.~Friedman.
\newblock One hundred and two problems in mathematical logic.
\newblock {\em The Journal of Symbolic Logic}, 40:113--129, 1975.

\bibitem{gab:96}
D.~M. Gabbay.
\newblock Fibred semantics and the weaving of logics. {I}. {M}odal and
  intuitionistic logics.
\newblock {\em The Journal of Symbolic Logic}, 61(4):1057--1120, 1996.

\bibitem{gab:99}
D.~M. Gabbay.
\newblock {\em Fibring Logics}, volume~38 of {\em Oxford Logic Guides}.
\newblock The Clarendon Press, Oxford University Press, 1999.

\bibitem{gab:74}
D.~M. Gabbay and D.~H.~J. de~Jongh.
\newblock A sequence of decidable finitely axiomatizable intermediate logics
  with the disjunction property.
\newblock {\em The Journal of Symbolic Logic}, 39:67--78, 1974.

\bibitem{gab:03}
D.~M. Gabbay, A.~Kurucz, F.~Wolter, and M.~Zakharyaschev.
\newblock {\em Many-dimensional Modal Logics: Theory and Applications}, volume
  148 of {\em Studies in Logic and the Foundations of Mathematics}.
\newblock North-Holland, 2003.

\bibitem{ghi:99}
S.~Ghilardi.
\newblock Unification in intuitionistic logic.
\newblock {\em The Journal of Symbolic Logic}, 64(2):859--880, 1999.

\bibitem{ghi:00}
S.~Ghilardi.
\newblock Best solving modal equations.
\newblock {\em Annals of Pure and Applied Logic}, 102(3):183--198, 2000.

\bibitem{iem:14}
J.~P. Goudsmit and R.~Iemhoff.
\newblock On unification and admissible rules in {G}abbay--de {J}ongh logics.
\newblock {\em Annals of Pure and Applied Logic}, 165(2):652--672, 2014.

\bibitem{har:60}
R.~Harrop.
\newblock Concerning formulas of the types {$A\rightarrow B\bigvee
  C,\,A\rightarrow (Ex)B(x)$} in intuitionistic formal systems.
\newblock {\em The Journal of Symbolic Logic}, 25:27--32, 1960.

\bibitem{iem:01}
R.~Iemhoff.
\newblock On the admissible rules of intuitionistic propositional logic.
\newblock {\em The Journal of Symbolic Logic}, 66(1):281--294, 2001.

\bibitem{iem:05}
R.~Iemhoff.
\newblock Intermediate logics and {V}isser's rules.
\newblock {\em Notre Dame Journal of Formal Logic}, 46(1):65--81 (electronic),
  2005.

\bibitem{iem:15}
R.~Iemhoff.
\newblock A note on consequence.
\newblock Preprint, Utrecht University, The Netherlands, 2015.
\newblock Submitted for publication.

\bibitem{iem:09}
R.~Iemhoff and G.~Metcalfe.
\newblock Proof theory for admissible rules.
\newblock {\em Annals of Pure and Applied Logic}, 159(1-2):171--186, 2009.

\bibitem{jer:05}
E.~Je{\v{r}}{\'a}bek.
\newblock Admissible rules of modal logics.
\newblock {\em Journal of Logic and Computation}, 15(4):411--431, 2005.

\bibitem{jer:07}
E.~Je{\v{r}}{\'a}bek.
\newblock Complexity of admissible rules.
\newblock {\em Archive for Mathematical Logic}, 46(2):73--92, 2007.

\bibitem{jer:10}
E.~Je{\v{r}}{\'a}bek.
\newblock Bases of admissible rules of {L}ukasiewicz logic.
\newblock {\em Journal of Logic and Computation}, 20(6):1149--1163, 2010.

\bibitem{jer:13}
E.~Je{\v{r}}{\'a}bek.
\newblock The complexity of admissible rules of {L}ukasiewicz logic.
\newblock {\em Journal of Logic and Computation}, 23(3):693--705, 2013.

\bibitem{kra:wol:91}
M.~Kracht and F.~Wolter.
\newblock Properties of independently axiomatizable bimodal logics.
\newblock {\em The Journal of Symbolic Logic}, 56(4):1469--1485, 1991.

\bibitem{lor:55}
P.~Lorenzen.
\newblock {\em Einf\"uhrung in die operative {L}ogik und {M}athematik}.
\newblock Die Grundlehren der mathematischen Wissenschaften in
  Einzeldarstellungen mit besonderer Ber\"ucksichtigung der Anwendungsgebiete,
  Bd. LXXVIII. Springer-Verlag, Berlin-G\"ottingen-Heidelberg, 1955.

\bibitem{met:12}
G.~Metcalfe.
\newblock Admissible rules: From characterizations to applications.
\newblock In {\em Logic, Language, Information and Computation}, volume 7456 of
  {\em LNCS}, pages 56--69. Springer, 2012.

\bibitem{odi:13}
S.~Odintsov and V.~Rybakov.
\newblock Unification and admissible rules for paraconsistent minimal
  {J}ohanssons' logic {$\bold{J}$} and positive intuitionistic logic
  {$\bold{IPC}+$}.
\newblock {\em Annals of Pure and Applied Logic}, 164(7-8):771--784, 2013.

\bibitem{ols:08}
J.~S. Olson, J.~G. Raftery, and C.~J. van Alten.
\newblock Structural completeness in substructural logics.
\newblock {\em Logic Journal of the IGPL}, 16(5):455--495, 2008.

\bibitem{pru:73}
T.~Prucnal.
\newblock Proof of structural completeness of a certain class of implicative
  propositional calculi.
\newblock {\em Studia Logica}, 32:93--97, 1973.

\bibitem{ryb:84}
V.~V. Rybakov.
\newblock A criterion for admissibility of rules in the modal system {${\rm
  S}4$} and intuitionistic logic.
\newblock {\em Algebra i Logika}, 23(5):546--572, 600, 1984.

\bibitem{ryb:97}
V.~V. Rybakov.
\newblock {\em Admissibility of Logical Inference Rules}, volume 136 of {\em
  Studies in Logic and the Foundations of Mathematics}.
\newblock North-Holland, 1997.

\bibitem{acs:css:ccal:98a}
A.~Sernadas, C.~Sernadas, and C.~Caleiro.
\newblock Fibring of logics as a categorial construction.
\newblock {\em Journal of Logic and Computation}, 9(2):149--179, 1999.

\bibitem{acs:css:jfr:11b}
A.~Sernadas, C.~Sernadas, and J.~Rasga.
\newblock On meet-combination of logics.
\newblock {\em Journal of Logic and Computation}, 22(6):1453--1470, 2012.

\bibitem{css:jfr:wcarnielli:01}
C.~Sernadas, J.~Rasga, and W.~A. Carnielli.
\newblock Modulated fibring and the collapsing problem.
\newblock {\em Journal of Symbolic Logic}, 67(4):1541--1569, 2002.

\bibitem{css:jfr:acs:12}
C.~Sernadas, J.~Rasga, and A.~Sernadas.
\newblock Preservation of {C}raig interpolation by the product of matrix
  logics.
\newblock {\em Journal of Applied Logic}, 11(3):328--349, 2013.

\bibitem{tho:84}
R.~H. Thomason.
\newblock Combinations of tense and modality.
\newblock In {\em Handbook of philosophical logic, {V}ol.\ {II}}, volume 165 of
  {\em Synthese Library}, pages 135--165. Reidel, 1984.

\bibitem{wol:08}
F.~Wolter and M.~Zakharyaschev.
\newblock Undecidability of the unification and admissibility problems for
  modal and description logics.
\newblock {\em ACM Transactions on Computational Logic}, 9(4):Art. 25, 20,
  2008.

\bibitem{zan:acs:css:99}
A.~Zanardo, A.~Sernadas, and C.~Sernadas.
\newblock Fibring: Completeness preservation.
\newblock {\em Journal of Symbolic Logic}, 66(1):414--439, 2001.

\end{thebibliography}
%\bibliographystyle{plain}

\end{document}